\documentclass[reqno, 11pt]{amsart}
\usepackage{amsaddr}
\usepackage{comment}
\usepackage{amsmath} 
\usepackage{amssymb}
\usepackage{amsthm} 
\usepackage{caption}
\usepackage{cutwin}
\usepackage{graphicx}
\usepackage{subcaption}
\usepackage{enumerate}
\usepackage{float}
\usepackage{tabularx} 
\usepackage[dvipsnames, hyperref]{xcolor}
\usepackage{graphicx} 
\usepackage[margin=1in,letterpaper]{geometry} 
\usepackage[backend=biber,style=numeric,sorting=nyt,maxbibnames=99]{biblatex}
\usepackage[final]{hyperref} 
\hypersetup{
    colorlinks=true,
    linkcolor=BlueViolet,
    citecolor=JungleGreen,  
    urlcolor=cyan
    }
\usepackage{tikz}
\usetikzlibrary{arrows.meta,
                positioning,
                quotes
                }
\usepackage{wrapfig}
\usepackage{cleveref}

\newtheorem{theorem}{Theorem}[section]
\newtheorem{lemma}[theorem]{Lemma}

    \theoremstyle{definition}
\newtheorem{definition}[theorem]{Definition}
\newtheorem{assumption}{Assumption}
    \theoremstyle{remark}
\newtheorem{remark}[theorem]{Remark}

\def\R{\mathbb{R}}

\def\N{\mathbb{N}}

\def\x{\textbf{x}}
\def\ex{\mathbb{E}}
\def\ep{\epsilon}

\def\calL{\mathcal{L}}
\def\cL{\mathcal{L}}

\def\ra{\rightarrow}

\newcommand\numberthis{\addtocounter{equation}{1}\tag{\theequation}}

\def\sqra{\stackrel{\square}{\ra}}
\def\lra{\stackrel{L^2}{\ra}}

\keywords{Reaction--diffusion equations, graphons, stochastic processes, dynamical systems. }
\subjclass[2020]{37A50, 35K57, 
05C63, 05C81, 34B45.} 

\begin{document}

\title{Graphon Limits of Graph Reaction--Diffusion Equations}
\date{\today} 
\author{Edith J. Zhang}
\address{
    Department of Applied Physics and Applied Mathematics, Columbia University in the City of New York, NY, 10027, United States of \, America
    \\ Current Affiliation: 
  Department of Mathematics, University of California, Los Angeles, CA, 90095, United States of America
}
\email{{\tt edith@math.ucla.edu} (corresponding author)}

\author{James Scott}
\address{Department of Applied Physics and Applied Mathematics, Columbia University in the City of New York, NY, 10027, United States of  America \\
Current Affiliation: Department of Mathematics and Statistics, Auburn University, Auburn,  Alabama, 36849, United States of America}
\email{{\tt james.m.scott@auburn.edu}}

\author[Q. Du]{Qiang Du}
\address{Department of Applied Physics and Applied Mathematics, and Data Science Institute, Columbia University in the City of New York, NY, 10027, United States of  America}
\email{{\tt qd2125@columbia.edu}}

\begin{abstract}
    A graph reaction--diffusion (RD) equation is a system of differential equations that is defined on the nodes of a graph. 
    Consider a sequence of growing graphs that converges in cut norm to a limiting graphon. We show that the solutions of the sequence of graph RD equations converge in $L^p$ norm, for $p \in [1,\infty]$, to the solution of a limiting nonlocal RD equation, which we call a graphon RD equation. Furthermore, we show a large numbers result for a stochastic particle process that consists of a random walk and a birth-death process on graphs. For a sequence of graphs that converge in cut norm to a limiting graphon, the sequence of stochastic processes converges in probability to the solution of the graphon RD equation. 
\end{abstract}

\maketitle

\tableofcontents

%%%%%%%%%%%%%%%%%%%%%%%%%%%%%%

\section{Introduction} 

Classical reaction--diffusion (RD) equations originated to describe the evolution of a concentration of a substance in chemical reactions in which particles diffuse (i.e., travel) and react (i.e., appear and disappear). They arise in fields such as biology, ecology, chemistry, and physics~\cite{britton1986reaction, cantrell2004spatial, vanag2009cross, de1986reaction}. RD equations are semi-linear parabolic  partial differential equations (PDEs) of the form 
\begin{equation}\label{eq: classical RD}
    \frac{\partial u}{\partial t}(x,t) = Lu(x,t) + \Phi(u(x,t)) \,,
\end{equation}
where $u=u(x,t)$ is the density or concentration of a substance at time $t$ and spatial location $x$, $L$ is a diffusion operator, and $\Phi$ is a function that incorporates the effect of local reactions. 

RD equations defined on graphs, which we call \textit{graph RD equations}, are used for applications in which there is some heterogeneous connectivity between individual nodes. Such applications include population ecology on population networks and disease spread on social networks~\cite{yuasa1998internal, colizza2007reaction, van2021theory,scalise2016emulating}. 

For a graph with $n$ nodes and an adjacency matrix $\{A_{ij}\}_{i,j=1}^n$, the graph RD equation is a system of equations given by
\begin{equation}\label{eq: graph RD intro}
    \frac{\partial}{\partial t} u_i(t) = \frac{1}{n} \sum_{j=1}^n A_{ij} \big(u_j(t)-u_i(t)\big) + \Phi\big(u_i(t)\big) \,, \;\;\;\; i\in \{1,\dots, n\}\,, 
\end{equation}
    where $u_i(t)$ describes the time-varying concentration of a substance on node $i$ of the graph and $\Phi: \R \ra \R$ describes the effect of local reactions of $u$. Therefore, a graph RD equation on a graph with $n$ nodes is a system of $n$ differential equations, and \eqref{eq: graph RD intro} is an analogue of equation \eqref{eq: classical RD} in which, in addition to being discrete, has a nonlocal diffusion term that is weighted by the graph's edge-weights $\{A_{ij}\}$. 

In the present paper, we consider the processes governed by \eqref{eq: graph RD intro} and its stochastic version, as well as their large graph limit as $n\to \infty$. Graphons are well-suited to the study of dynamical systems on graphs because graphons are spatially-continuous objects. Therefore, they allow us to frame graph problems as functional-analytic problems, and they open up the possibility for more sophisticated problem-solving techniques than those that are available for discrete problems. For example, graphons have been used as kernels that induce nonlocal operators~\cite{el2023nonlocal, bramburger2023pattern, medvedev2014nonlinear, heinze2024graph}. The class of nonlocal operators that we are interested in are operators of the form 
\begin{equation}\label{eq: def nonlocal operator}
    \calL^W(u)(x) = \int_\Omega W(x,y) \big(u(y)-u(x)\big) dy \,,
\end{equation}
where $\Omega$ is a bounded and connected domain in $\R^n$ and $W(x,y): \Omega^2 \ra \R$ is a nonnegative kernel function. The operator $\calL^W$ is nonlocal for kernels $W$ that are supported not only on the diagonal $x=y$ \cite{du1992analysis,du2019nonlocal}, because the value of $\calL^W u(x)$ depends on the difference $u(y) - u(x)$ for values of $x$ and $y$ that are not necessarily close to each other. In contrast, a local operator is an operator whose value at a single point depends only on the value (or derivatives) of a function at a single point (see more discussions on the definitions of local and nonlocal operators in \cite{du2019multiscale}.) 
A graphon RD equation is given by \eqref{eq: classical RD} with $L=\calL^W$. If $\Phi\equiv 0$, then \eqref{eq: graph diffusion equation} gives the graph diffusion equation, and \eqref{eq: classical RD} gives the graphon diffusion equation with $L=\calL^W$; see more detailed discussions in Section \ref{sec: graphon diffusion operator}).

\subsection{Related work and our contributions}\label{sec: contributions}

There have been extensive studies of the graph diffusion equations. For brevity in Figure \ref{fig: heat relations}, we denote the graph diffusion equation corresponding to a graph $W_n$ as $\text{Diffusion}^{W_n}$, and the graphon diffusion equation corresponding to a graphon $W$ as $\text{Diffusion}^W$.  
Medvedev~\cite{medvedev2014nonlinear} showed that if a sequence of graphs $W_n$ converges in $L^2$ norm to a graphon $W$, then the sequence of solutions of $\text{Diffusion}^{W_n}$ converges in $L^2$ norm to the solution of $\text{Diffusion}^W$. We extend this result to include the case where $W_n$ converges in cut norm to $W$. The cut norm is a weaker form of convergence than is $L^2$ convergence; see Section \ref{sec: graphons}. 

Diffusion equations are naturally connected to random processes. Angstmann et al.~\cite{angstmann2013pattern} showed that $\text{Diffusion}^{W_n}$ is the master equation of a node-centric continuous-time random walk (RW) on a graph $W_n$, which we denote by $\text{RW}^{W_n}$. In the present paper, we show that RW$^{W_n}$ converges in probability to the solution of Diffusion$^W$ if $W_n$ converges in cut norm to $W$.

\begin{figure}[H]
    \centering
        \begin{tikzpicture}[auto,
                       > = Stealth, 
           node distance = 22mm and 44mm,
              box/.style = {draw=gray, very thick,
                            minimum height=11mm, text width=22mm, 
                            align=center},
       every edge/.style = {draw, <->, very thick},
every edge quotes/.style = {font=\footnotesize, align=center, inner sep=1pt}]

    \node (n11) [box] {$\text{Diffusion}^{W_n}$};
    
    \node (n21) [box, xshift=8cm, yshift=0cm] {$\text{Diffusion}^{W}$};
    
    \node (n31) [box, yshift=0cm, yshift=0cm, below=of n11] {$\text{RW}^{W_n}$};
    
    \draw   [->] (n11) --node[midway,above]{$W_n \lra W$: Medvedev~\cite{medvedev2014nonlinear}}    (n21);
    
    \draw   [->] (n11) --node[midway,below]{$W_n \sqra W$: Theorem \ref{thm: diffusion convergence}}    (n21);
    
    \draw [->] (n31) --node[midway,left]{Master equation~\cite{angstmann2013pattern}}     (n11); 

    \draw[->] (n31) --node[sloped, midway,below,]{Theorem \ref{thm: lln in l2}} (n21);
    \end{tikzpicture}
    \caption{Existing results for graph diffusion equation}
\label{fig: heat relations}
\end{figure}
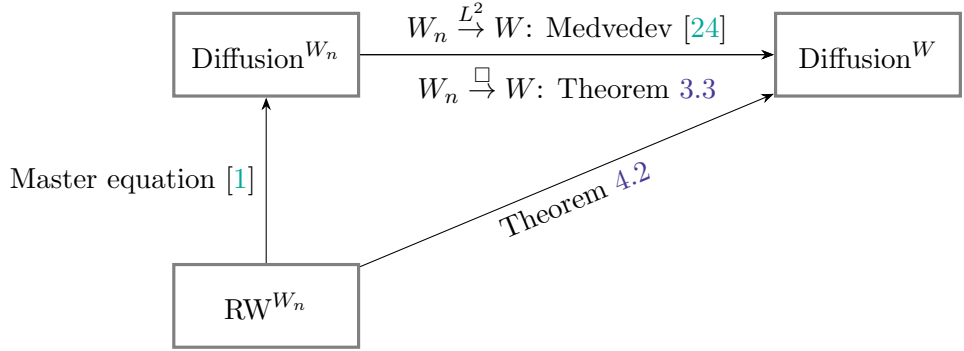

Going beyond pure diffusion equations, 
Figure \ref{fig: rd relations} shows the analogue of
Figure \ref{fig: heat relations} for
more general RD equations, and the graph random walks are generalized to graph random walks that are superimposed with birth-death processes (we denote these by RWBD$^{W_n}$). Angstmann~\cite{angstmann2013pattern} showed that a RD equation on a graph is the master equation of $\text{RWBD}^{W_n}$ on that graph. The reaction function $\Phi$ of the limiting RD equation is the birth rate minus the death rate of the stochastic process. 
Watanabe \cite[Theorem 5.1]{watanabe2022continuum} proves that $\text{RWBD}^{W_n} \ra \text{RD}^{W}$ in probability when $W_n$ is a sequence of ``quotient graphs", which is a special sequence of graphs that converge in the $L^2$ norm to a given graphon. We extend their result to the case in which $W_n$ is any sequence of graphs such that $W_n \sqra W$.

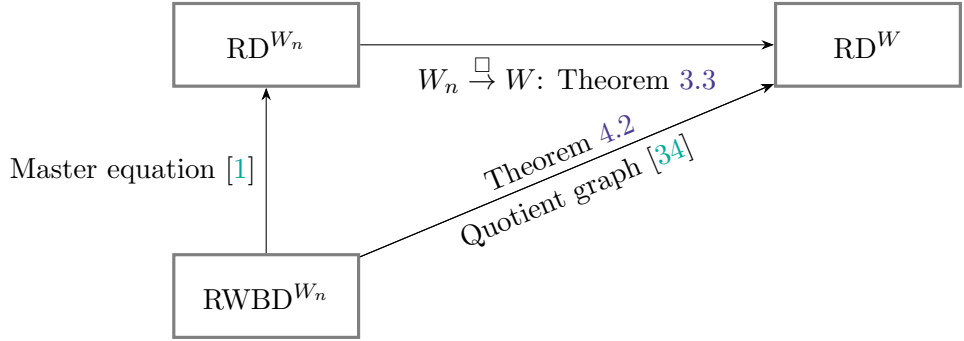
\begin{figure}[H]
    \centering
        \begin{tikzpicture}[auto,
                       > = Stealth, 
           node distance = 22mm and 44mm,
              box/.style = {draw=gray, very thick,
                            minimum height=11mm, text width=22mm, 
                            align=center},
       every edge/.style = {draw, <->, very thick},
every edge quotes/.style = {font=\footnotesize, align=center, inner sep=1pt}]
    \node (n11) [box] {$\text{RD}^{W_n}$};
    \node (n21) [box, xshift=8cm, yshift=0cm] {$\text{RD}^{W}$};
    \node (n31) [box, yshift=0cm, yshift=0cm, below=of n11] {$\text{RWBD}^{W_n}$};
    \draw   [->] (n11) --node[midway,below]{$W_n \sqra W$: Theorem \ref{thm: diffusion convergence}}    (n21);
    \draw [->] (n31) --node[midway,left]{Master equation~\cite{angstmann2013pattern}}     (n11); 
    \draw [->] (n31) --node[sloped, midway, above]{Theorem \ref{thm: lln in l2}}(n21);
    \draw [->] (n31) --node[sloped, midway, below,]{Quotient graph \cite{watanabe2022continuum}}(n21);
    \end{tikzpicture}
    \caption{Reaction--diffusion analogues} 
\label{fig: rd relations}
\end{figure}

A more statistical mechanics--based perspective of this problem is seen in \cite{funaki2019motion} in which the RWBD dynamics are called Glauber--Kawasaki dynamics. The density of these dynamics, with a special choice of birth- and death-rates, are shown to converge to the solution of the Allen--Cahn equation, which is a special case of RD equations. 

In a previous paper~\cite{zhang2024ginzburg}, we proved that sequences of graph Ginzburg--Landau (GL) functionals $\Gamma$-converge to a limiting graphon GL functional. \cite{zhang2024ginzburg} considers a time-independent (i.e., steady-state) problem, whereas this paper considers time-dependent (i.e., dynamical) problems that correspond to gradient flows of energy functionals of which the GL functional is a special case. 

The convergence proofs in~\cite{zhang2024ginzburg} suggest that graphon GL functionals can serve as proxies for graph GL functionals (as well as minimum-cut functionals), whereas the convergence proofs in this paper suggest that graphon RD equations can serve as proxies for graph RD equations. The gradient flow (with respect to the $L^2$ metric) of the graph Ginzburg--Landau functional is the graph Allen--Cahn equation, which is an example of a graph RD equation~\cite{luo2017convergence}. We consider more general RD equations, and our results suggest that graphon RD equations can serve as proxies for graph RD equations. See \cite{serfaty2011gamma} for a method to prove directly that $\Gamma$-convergence of functionals implies convergence of their gradient flows. Our results are more general because 1) we provide convergence rates and 2) we provide convergence in a variety of norms.

Our contributions center on three types of dynamics. These are the linear graph/graphon diffusion equation, the nonlinear graph/graphon reaction--diffusion (RD) equation, and interacting particle system on graphs/graphons.  

Our contributions relating to the graphon diffusion equation are as follows. Lemma \ref{lem: stability of semigroup} gives a maximum bound principle (in $L^p$ norm, for $p \in [1,\infty]$) for the solution of the graph(on) diffusion equation; it implies Theorem \ref{thm:WellDefinedSemigroup} which gives well-posedness of the graph(on) diffusion equation. Theorem \ref{thm: diffusion convergence} proves convergence of the solution of the graph diffusion equation to the solution of the graphon diffusion equation, and it extends \cite{medvedev2014nonlinear} because we assume that $W_n$ and $W$ are $L^1$ graphons and $W_n \sqra W$, whereas \cite{medvedev2014nonlinear} assumes $W_n, W$ are $L^2$ and converge in $L^2$ norm. Theorem \ref{thm: L infty diffusion convergence} obtains convergence of the solution of the graph RD equation to the solution of the graphon RD equation in $L^\infty$ norm provided that $W_n, W$ are $L^\infty$ graphons and $W_n \ra W$ in $L^\infty$ norm. 

Our contributions on the solutions of the graphon RD equation are as follows. Theorems \ref{thm:RdWellPosed-infty} and \ref{thm:RdWellPosed-p} prove, (under suitable assumptions), well-posedness of the strong and mild (respectively) solutions to the graphon RD equation. Theorem \ref{thm:MaxPrinciple} provides a maximum bound principle for the solution of the graphon RD equation. Theorem \ref{thm: graph rd convergence to graphon rd} shows that the solutions of a sequence of graph RD equations converge in $L^p$ norm to the solution of a limiting graphon RD equation. Theorem \ref{thm: graph rd convergence to graphon rd Linfty} shows a similar result in $L^\infty$ norm. These results are novel. 

We also prove a law of large numbers (LLN) result on a stochastic process that converges in probability to the solution of the graphon RD equation. Theorem \ref{thm: lln in l2} is the main result, and it differs from \cite[Theorem 5.1]{watanabe2022continuum} in the choice of norms used for the graph convergence and for the LLN convergence. \cite[Theorem 5.1]{watanabe2022continuum} uses graphons $W$ that are reproducing kernels that generate a reproducing kernel Hilbert space, and graphs that are obtained from $W$ by discretization (and thus converge to $W$ in $L^2$ norm). The sequences of graphs that we consider are more general than those used in \cite{watanabe2022continuum}, namely, we consider any sequences of graphs that converge in cut norm (which we define in section \ref{sec: graphons}) to a limiting graphon. For this more general limit, we are still able to show a weaker $L^2$ convergence, in contrast to the convergence in the RKHS space %now
established for more restrictive cases in 
\cite[Theorem 5.1]{watanabe2022continuum}.

This paper is structured as follows: we first state the assumptions on the model equations and initial conditions in Section \ref{sec: assump}. In Section \ref{sec: graphons}, we introduce definitions and notations relating to graphons and the graphon Laplacian (a.k.a. graphon diffusion operator). In Section \ref{sec: RD equations}, we prove convergence results relating to the graphon diffusion equation and graphon RD equations. In Section \ref{sec: IPS}, we introduce a stochastic particle process on graphs and prove a law of large numbers result that relates the stochastic process to the solution of a graphon RD equation. In Section \ref{sec: conclusion} we conclude with some ideas for future work.

\subsection{Assumptions}
\label{sec: assump}

We present some assumptions on the kernel $W$, the reaction term $\Phi$, and the initial conditions. We also list the theorems for which these assumptions are needed. 

We first consider assumptions on graphs and graphons. Note that we treat any graph as a piecewise constant graphon (see Section \ref{sec: graphons and cut norm}), so it suffices to state assumption on graphons. We impose \Cref{assump:W} on all graphs and graphons throughout the rest of the paper. 
\begin{assumption}\label{assump:W}
Let $W$ be a graphon. We assume that 
    \begin{equation}\label{eq:assump:W}
        \begin{gathered}
        W \in L^1((0,1)^2)\,, 
        \qquad  W(x,y) = W(y,x)  \text{ and } W(x,y) \geq 0\,,\:\: \forall (x,y) \in (0,1)^2\,, 
        \\
        \text{and } d_W(x) := \int_0^1 W(x,y) dy \leq 1 \quad \text{ for a.e. } x \in (0,1)\,.
        \end{gathered}
    \end{equation}
\end{assumption}
\Cref{assump:W2} states additional assumptions on $W$ and $W_n$ needed for various results. 
\begin{assumption}\label{assump:W2}
Two different sets of assumptions that we use for sequences of graphs $W_n$ converging to a graphon $W$ are 
 \begin{enumerate}[i)] 
    \item $W_n, W \in L^\infty((0,1)^2)$ and $W_n \ra W$ in $L^\infty$. These are needed in Theorems \ref{thm: L infty diffusion convergence}, and \ref{thm: graph rd convergence to graphon rd Linfty}. 
    \item $W_n, W \in L^1((0,1)^2)$ and $W_n \sqra W$. These are needed for Theorems \ref{thm: diffusion convergence}, \ref{thm: graph rd convergence to graphon rd}, and \ref{thm: lln in l2}. 
\end{enumerate}
\end{assumption}
\begin{assumption}\label{assump: phi} %
Four different assumption sets 
on $\Phi: \R \ra \R$ are
\begin{enumerate}  
    \item[i)]  $\Phi(0)= 0$, and $\Phi$ is locally Lipschitz continuous (that is, $\Phi$ is Lipschitz continuous on every bounded subset of $\R$).  These are needed for 
    Theorems \ref{thm:RdWellPosed-infty}, \ref{thm:MaxPrinciple}, and \ref{thm: graph rd convergence to graphon rd Linfty}. 
    \item[ii)] $\Phi(0)= 0$, $\Phi$ is locally Lipschitz continuous, and there exist constants $M_1$, $M_2$ with $M_1 < M_2$ such that $\Phi(M_2) \leq 0 \leq \Phi(M_1)$. These are needed for Theorem \ref{thm:MaxPrinciple}. 
    \item[iii)] $\Phi$ satisfies $\Phi(0)= 0$, and is uniformly Lipschitz continuous; that is, $\Phi$ is Lipschitz continuous with constant $K$ on all of $\R$. These are assumed in Theorems \ref{thm:RdWellPosed-p}, \ref{thm: graph rd convergence to graphon rd}.
    \item[iv)] $\Phi(x) = b(x) - d(x)$, where $b, d: \R \ra \R$ are uniformly Lipschitz-continuous and satisfy $b(0) = d(0) = 0$. These are needed in \Cref{thm: lln in l2}, where $b$ and $d$ are birth and death rates, respectively, of a stochastic particle system. 
\end{enumerate}
\end{assumption}

For all the graph and graph dynamics under consideration, initial conditions for the solution at $t=0$, denoted by $u(0)$ for the graphon equations, and $u_n(0)$ for the graph case, must be imposed.
Assumptions on $u_n(0)$ and $u(0)$ are as follows:
\begin{assumption}\label{assump: u0} The initial conditions satisfy, respectively, the following conditions for the corresponding results.
\begin{enumerate}[i)]
    \item $u(\cdot, 0) \in L^\infty((0,1))$ is needed for
    Theorems \ref{thm:RdWellPosed-infty}, \ref{thm:MaxPrinciple}, and \ref{thm: lln in l2}
    \item $u(\cdot, 0) \in L^p((0,1))$ is needed for
    Theorems \ref{thm:WellDefinedSemigroup} and \ref{thm:RdWellPosed-p}
    \item $u_n(\cdot, 0) \in L^p((0,1)), u(\cdot, 0) \in L^\infty((0,1))$ are needed for
    Theorems \ref{thm: diffusion convergence}, \ref{thm: L infty diffusion convergence}, and \ref{thm: graph rd convergence to graphon rd}
    \item $u_n(\cdot, 0) \in L^\infty((0,1)), u(\cdot, 0) \in L^\infty((0,1))$ are needed for Theorem \ref{thm: graph rd convergence to graphon rd Linfty}.
\end{enumerate}
\end{assumption}

%%%%%%%%%%%%%%%%%%%%%%%%%%%%%%%%%
\section{Graphons and norms of convergence}\label{sec: graphons}

For $n\in \N$, a graph on $n$ nodes is called $W_n$ and its nodes are denoted by the set $[n] = \{1,2,\dots, n\}$. Define the intervals 
\begin{equation}\label{eq: def I_k}
\begin{gathered}
    I_k = ((k-1)/n, \, k/n] \text{ for } k=1, \dots, n-1\,,
    \\
    I_n = ((n-1)/n, \, 1)\,,
    \end{gathered}
\end{equation} 
so that $\cup_{k=1}^n I_k = (0,1)$.   

\subsection{Graphons and cut norm}\label{sec: graphons and cut norm}
A graphon is a bounded, measurable, and symmetric function $W: (0,1)^2 \ra \R$ that serves as a notion of a large-graph limit~\cite{lovasz2012large}. Graphons are often defined as functions on the closed interval $[0,1]^2$, but we use the open interval because the open interval is better-suited to functional-analytic tools. It does not matter whether graphons are defined on $[0,1]^2$ or $(0,1)^2$, because graphons are Lebesgue-measurable. More generally, graphons can be defined as functions $W: \Omega^2 \ra \R$ where $\Omega \in \R^n$ is any connected and bounded domain.

Graphs are special cases of graphons. We consider graphs that are weighted, undirected, and simple (i.e., there are no self-edges or multi-edges). A graph $W_n$ with nodes $[n]$ has an associated adjacency matrix $A^{(n)}$ with entries $A^{(n)}_{ij}$ for $i,j \in [n]$. The graph $W_n$ is a piecewise-constant graphon that takes the constant value $A^{(n)}_{ij}$ on each product interval $I_i \times I_j$ for $i,j \in [n]$, where $I_i$ and $I_j$ are defined as in \eqref{eq: def I_k}. Thus, the graph $W_n$ can be expressed as the piecewise-constant graphon 
\begin{equation}   \label{eq: step-graphon} 
    W_n(x,y) = \left\{ A^{(n)}_{ij} \;\; \text{ for } (x,y) \in I_i \times I_j\,, \,\,  i,j \in \{1,\ldots, n\}  \right\}  \,.
\end{equation}
In this way, one can identify each finite graph $W_n$ with a graphon, which we will also denote by $W_n$ (see Remark \ref{remark: identify W_n}). As an example of this relation between the adjacency matrix and the graphon representation of a graph, Figure \ref{fig: example graphon} depicts the 4-cycle graph, its associated adjacency matrix, and its corresponding raphon (where white represents the value 0 and black represents the value 1). Figure \ref{fig: example graphon} is reproduced from our previous paper \cite{zhang2024ginzburg}. 

\begin{figure}[H]
\captionsetup[subfigure]{font=footnotesize}
\centering
    \subcaptionbox{The 4-cycle graph. }[.32\textwidth]{
        \begin{tikzpicture}
        \node at (0,0)[circle,fill,inner sep=2pt]{};
        \node at (0,2)[circle,fill,inner sep=2pt]{};
        \node at (2,0)[circle,fill,inner sep=2pt]{};
        \node at (2,2)[circle,fill,inner sep=2pt]{};

        \draw (0,2) node[anchor=south east] {$1$};
        \draw (2,2) node[anchor=south west] {$2$};
        \draw (2,0) node[anchor=north west] {$3$};
        \draw (0,0) node[anchor=north east] {$4$};
        \draw[-,very thick] (0,0)--(0,2);
        \draw[-,very thick] (0,0)--(2,0);
        \draw[-,very thick] (0,2)--(2,2);
        \draw[-,very thick] (2,0)--(2,2);
    \end{tikzpicture}
    }
    \subcaptionbox{Its associated adjacency \\ matrix.}[.32\textwidth]{
    {\large $\begin{pmatrix}
        0&1&0&1
        \\
        1&0&1&0
        \\
        0&1&0&1
        \\
        1&0&1&0
    \end{pmatrix}$}
    }
    \hspace{.2cm}
    \subcaptionbox{Its corresponding graphon.}[.32\textwidth]{
    \resizebox{100pt}{100pt}{%
        \begin{tikzpicture}
            %grid and axes
            \draw[help lines, color=black, solid, thick, step = 1] (0,0) grid (4,4);
            
            %labels
            \foreach \x in {1,..., 4}
                \node at (\x-.5, 4+.4) {\x} node at (0-.5, 4-\x+.5) {\x};
                
            %filled squares
            \draw[color=black, fill = black] (1,3) rectangle (2,4); 
            \draw[color=black, fill = black] (0,2) rectangle (1,3); 
            \draw[color=black, fill = black] (3,3) rectangle (4,4); 
            \draw[color=black, fill = black] (2,2) rectangle (3,3); 
            \draw[color=black, fill = black] (1,1) rectangle (2,2); 
            \draw[color=black, fill = black] (3,1) rectangle (4,2); 
            \draw[color=black, fill = black] (0,0) rectangle (1,1); 
            \draw[color=black, fill = black] (2,0) rectangle (3,1);  
\end{tikzpicture}}}
\caption{An example of a graph and its associated adjacency matrix and step-graphon.}
\label{fig: example graphon}
\end{figure}
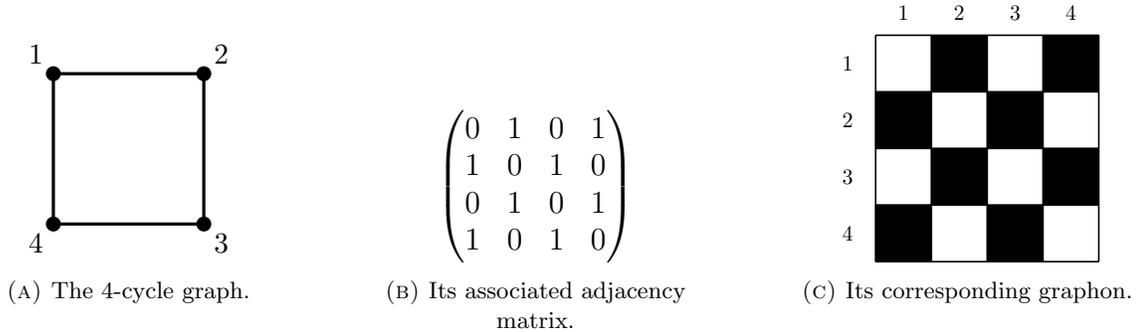

In other words, we associate the $i$th node with the interval $I_i$; the value of the graphon on the interval $I_i \times I_j$ gives the weight of the edge between nodes $i$ and $j$. In the $n\ra \infty$ limit, the intervals shrink and $(0,1)$ can be viewed as a infinite node-set and $W(x,y)$ can be viewed as the weight of the edge between the infinitesimal nodes $x$ and $y$. 

In this paper, we apply a functional-analytic viewpoint to graphons, and we consider graphons that reside in different function spaces. The original graphons defined in~\cite{lovasz2012large} are bounded functions $W \in L^\infty((0,1)^2)$, and they are limits of dense sequences of graphs. More generally, graphons $W \in L^p((0,1)^2)$ include limits of sparse sequences of graphs~\cite{bollobas2007metrics,borgs2018LpII}. In this paper, we think of graphons simply as $L^p$ functions, for $p \in [1,\infty]$, rather than as limits of dense or sparse sequences of graphs. $L^p$ graphons can be associated with kernels for integral operators from $L^q((0,1))$ to $L^p((0,1))$ for $q = \frac{p}{p-1}$ (c.f. H\"older's inequality). %~\cite{levermore2009integral}
Similarly, $L^\infty$ graphons induce integral operators from $L^1((0,1))$ to $L^\infty((0,1))$.  In Section \ref{sec: RD equations}, we assume that $W_n \in L^1((0,1)^2)$, and in Section \ref{sec: IPS}, we assume that $W_n \in L^\infty((0,1)^2)$.

Graphs and graphons converge to a limiting graphon with respect to the \textit{cut norm}~\cite{lovasz2012large}. The cut norm induces a metric such that the set of $L^p$ graphons is a compact metric space~\cite{lovasz2007szemeredi, borgs2019LpI} for $p\in (1,\infty]$. The set of $L^1$ graphons is compact under the additional assumption that $W_n$ are uniformly integrable \cite[Theorem C.7]{borgs2019LpI}. Therefore, any bounded sequence $\{W_n\}_{n\in \N}$ of graphons has a subsequence $W_{n'}$ such that $\|W_{n'} - W\|_\square \ra 0$. 

\begin{definition}
    The \emph{cut norm} of a graphon $W$ is defined as 
\begin{equation} 
    \|W\|_\square = \sup_{S\subseteq (0,1)} \int_{S \times S^c} W(x,y)\,dx\,dy \,. 
\label{eq: cut norm} 
\end{equation}
\end{definition}
There are other equivalent definitions of the cut norm \eqref{eq: cut norm}~\cite{janson2010graphons}. One version that we use in this paper is 
\begin{equation}
    \| W \|_\square = \sup_{\phi,\psi \in L^\infty((0,1);[-1,1])} \int_0^1 \int_0^1 W(x,y) \phi(x)\psi(y)\,dx\,dy \,,
\label{eq: cut norm 1}
\end{equation}
where $L^\infty((0,1);[-1,1])$ is the set of functions that map $(0,1)$ to $[-1,1]$.

\begin{remark}\label{remark: identify W_n} Any finite weighted graph $W_n: [n] \ra \R$ is at cut norm distance $0$ from its corresponding step-graphon $W_n$. Therefore, we use the notation $W_n$ for both objects. 
\end{remark}

\begin{remark} 
For $p\in [1,\infty]$, we can approximate any $L^p$ graphon arbitrarily closely in cut norm by a finite graph. Conversely, the set of $L^p$ graphons for $p \in (1,\infty]$ is compact with respect to cut norm. The set of $L^1$ graphons that are uniformly integrable is compact with respect to cut norm. See ~\cite[Remark 4.6]{janson2010graphons}, \cite[Theorem 2.13]{borgs2019LpI}, \cite[Theorem C.7]{borgs2019LpI}. 
\end{remark}

\begin{remark}
    The cut norm is equivalent to the operator norm of the kernel operator that is induced by the graphon $T_W(f) = \int_0^1 W(x,y) f(y) \, d\mu(y)$, which is a linear operator $T_W: L^\infty((0,1)) \ra L^1((0,1))$. In fact, the terms ``graphon" and ``kernel" are sometimes used interchangeably~\cite{janson2010graphons}. 
\end{remark}

Because the present paper extends existing results (see Section \ref{sec: contributions}), in part by altering the norm of convergence, it is important to discuss the different norms. From \cite[Remark 4.1]{janson2010graphons}, we have that 
\begin{equation}\label{eq: norm inequalities}
    \left| \int_0^1 \int_0^1  W(x,y) \, dx \, dy \right| \leq \|W\|_\square \leq \|W\|_1 \leq \|W\|_2 \,. 
\end{equation}
This means that convergence in cut norm is weaker than convergence in $L^1$ norm, which is weaker than convergence in $L^2$ norm, in the sense that convergence in $L^2$ norm implies convergence in $L^1$ norm which implies convergence in cut norm.

We also define the degree of a graphon. Recall that the degree of node $i$ of a graph that has adjacency matrix $A$ is 
\[ \text{deg}_A(i) = \sum_{j=1}^n A_{ij} \,.\]
\begin{definition}
    The \textit{degree} of a graphon $W$ at node $x$ is defined as 
    \begin{equation}\label{eq: def degree}
        d_W(x) = \int_0^1 W(x,y) dy \,. 
    \end{equation}
\end{definition}

\subsection{Graphon diffusion operator}\label{sec: graphon diffusion operator}

Let $W \in L^\infty((0,1)^2)$ be a graphon. The corresponding graphon diffusion operator (a.k.a. the graphon Laplacian) is defined as 
\begin{equation} \label{eq: def graphon diffusion operator}
    \calL^W u(x, t) = \int_0^1 W(x,y) \big(u(y, t)-u(x, t)\big) \, dy 
\end{equation}
for $u \in L^1((0,1))$.  Consider a graph with nodes $[n]$ that has adjacency matrix $A^{(n)}_{ij}$ and let $u_n: [n] \ra \R$ be a function on the nodes of the graph. The graph diffusion operator (a.k.a. the graph Laplacian) is defined as \cite{merris1994laplacian}
\begin{equation}\label{eq: def graph diffusion operator}
    \calL^{W_n} u_n(t,i) = \frac{1}{n}\sum_{j=1}^{n} A^{(n)}_{ij} \big(u_n(j,t) - u_n(i,t)\big) \,.
\end{equation} 
By defining the step graphon $W_n$ that corresponds to the adjacency matrix $A^{(n)}_{ij}$ using the relation \eqref{eq: step-graphon}, we express the graph diffusion operator as the integral operator 
\begin{equation}\label{eq: def graph diffusion integral operator}
    \calL^{W_n} u(x, t) = \int_0^1 W_n (x,y) \big(u(y, t) - u(x, t)\big) \, dy 
\end{equation}
where $u$ is a step-function on $(0,1)$ that is equal to $u_n(k,t)$ on each $I_k$ for $k= 1,\dots, n$. Furthermore, $\calL^{W_n}$ can act on any function in $L^1((0,1))$. Therefore, \eqref{eq: def graph diffusion integral operator} is a special case of the graphon diffusion operator \eqref{eq: def graphon diffusion operator}. 

For $t>0$ and $x\in (0,1)$, the graphon diffusion equation is defined as 
\begin{equation}\label{eq: graphon diffusion equation}
    \frac{\partial u}{\partial t}(x,t) = \calL^W u(x,t) \,. 
\end{equation}
The graph diffusion equation 
is similarly defined as 
\begin{equation}\label{eq: graph diffusion equation}
        \frac{\partial u_n}{\partial t}(x,t) = \calL^{W_n} u_n(x,t) \,. 
\end{equation}
Note that $\int_0^1 \calL^W u(x,t) dx = 0$ due to the symmetry of $W$. As a result, $\int_0^1 \frac{\partial u}{\partial t}(x,t) dx = 0$, which means that the solutions to the graph and graphon diffusion equations conserve mass. This can be interpreted as a nonlocal version of the mass conservation property associated with the classical local diffusion equation with a local
zero-flux (Neumann) boundary condition.

\section{Reaction--diffusion equations on graphs and graphons} \label{sec: RD equations}

In this section, we introduce graph and graphon diffusion equations and reaction--diffusion (RD) equations. Section \ref{sec: graphon diffusion} provides a contraction result for the solution of the graphon diffusion equation and establishes that the solutions of a sequence of graph diffusion equations converges to the solution of a graphon diffusion equation, under certain assumptions on the graphons and norms of convergence. 

\subsection{Graphon diffusion equation} \label{sec: graphon diffusion}

We first analyze the diffusion operator $\calL^W$. For any $p \in [1,\infty)$, and for any $u \in L^p((0,1))$, we have 
\begin{align*}
        \| \cL^W u \|_p^p  
        &= \int_0^1 \left(\int_0^1 W(x,y) (u(y)-u(x)) dy\right)^p dx
        \\
        &\leq 2^{p-1} \int_0^1 \left| \int_0^1 W(x,y) u(y) \, dy \right|^p + \left| \int_0^1 W(x,y) u(x) \, dy \right|^p \, dx 
        \\
        &\leq 2^{p-1} \int_0^1 \left( \int_0^1 W(x,z) \, dz \right)^{p-1} \int_0^1 W(x,y) |u(y)|^p \, dy dx 
        \\
        &\qquad + 2^{p-1} \int_0^1 \left( \int_0^1 W(x,y) \, dy \right)^{p} |u(x)|^p \, dx 
        \\
        &= 2^{p-1} \int_0^1 \left( \int_0^1 d_W(x)^{p-1} W(x,y) \, dx \right) |u(y)|^p  dy + 2^{p-1} \int_0^1 d_W(x)^p |u(x)|^p \, dx 
        \\
        &\leq 2^p \| u \|_p^p \,. \numberthis \label{eq:DiffusionOperatorBd}
\end{align*}
The first inequality above follows from the fact that $\left(\frac{a+b}{2}\right)^p \leq \frac{a^p+b^p}{2}$ for $p>1$ because the function $x^p$ is convex in $x$. The second inequality is obtained by rewriting the quantity $W(x,y) u(y)$ as $W(x,y)^{1/p} u(y) W(x,y)^{(p-1)/p}$ and applying H\"older's inequality. The second equality uses the definition of the degree function of graphons. The last equality uses the definition of the degree of a node of a graphon (see equation \eqref{eq: def degree}).

We similarly obtain 
\begin{equation}\label{eq:DiffusionOperatorBd2}
    \| \cL^W u \|_\infty \leq 2 \| u \|_\infty, \qquad \forall u \in L^\infty((0,1)).
\end{equation}
Therefore, $\cL^W$ is a bounded linear operator from $L^p((0,1))$ to $L^p((0,1))$ for all $p \in [1,\infty]$.
As a consequence, 
$\cL^W$ generates a uniformly continuous semigroup on $L^p((0,1))$ for each $p \in [1,\infty]$~\cite[Section 1.1]{pazy2012semigroups}. We denote this semigroup by $\{ e^{t\cL^W } \}_{t \geq 0}$. 

We use \eqref{eq:DiffusionOperatorBd}--\eqref{eq:DiffusionOperatorBd2} in \Cref{lem: stability of semigroup}, which implies that solutions to the graphon diffusion equation \eqref{eq: graphon diffusion equation} satisfy a stability property. When proving \Cref{lem: stability of semigroup}, We use the fact that 
\begin{equation}\label{eq: time deriv of lp norm}
    \frac{\partial}{\partial t} \|u(\cdot,t)\|^p_p = p \left \langle \left|u(\cdot,t) \right|^{p-2} u(\cdot,t), \, \frac{\partial}{\partial t} u(\cdot,t) \right \rangle 
\end{equation}
which can be obtained via chain rule.

\begin{lemma} \label{lem: stability of semigroup}
    Let $p \in [1,\infty]$ and $T > 0$. Let $u \in C^1([0,T]; L^p((0,1)))$ be a classical solution to the graphon diffusion equation \eqref{eq: graphon diffusion equation} with the initial conditions $u_0\in L^p((0,1))$. Assume that the graphon $W\in L^1((0,1)^2)$ satisfies Assumption \ref{assump:W}. Then,
    \begin{equation*}
        \sup_{t \in [0,T] } \left\| u(\cdot,t)
       \right\|_p \leq \|u_0 \|_p. 
    \end{equation*}
\end{lemma}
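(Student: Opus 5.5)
For $p\in(1,\infty)$ the plan is to differentiate the $L^p$ norm in time using \eqref{eq: time deriv of lp norm} and show the derivative is nonpositive. With $\partial_t u=\cL^W u$ this gives
\[
\frac{d}{dt}\|u(\cdot,t)\|_p^p = p\int_0^1\!\!\int_0^1 W(x,y)\,\phi(u(x))\big(u(y)-u(x)\big)\,dy\,dx,\qquad \phi(s):=|s|^{p-2}s .
\]
Using the symmetry $W(x,y)=W(y,x)$, swap $x$ and $y$ and average to rewrite this as
\[
-\frac p2\int_0^1\!\!\int_0^1 W(x,y)\big(\phi(u(y))-\phi(u(x))\big)\big(u(y)-u(x)\big)\,dy\,dx .
\]
Since $\phi$ is nondecreasing and $W\ge0$, the integrand is nonnegative, so the derivative is $\le 0$. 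Hence $\|u(\cdot,t)\|_p\le\|u_0\|_p$ for all $t\in[0,T]$.

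The splitting into two integrals and the swap need integrability of $W(x,y)|u(x)|^{p-1}|u(y)|$ and $W(x,y)|u(x)|^p$. For the first, write $|u(x)|^{p-1}|u(y)| = W^{(p-1)/p}|u(x)|^{p-1}\cdot W^{1/p}|u(y)|$, apply Hölder, and use $d_W\le1$ together with symmetry, exactly as in the derivation of \eqref{eq:DiffusionOperatorBd}. The second is bounded by $\int d_W|u|^p\le\|u\|_p^p$. I also need to justify \eqref{eq: time deriv of lp norm} for $u\in C^1([0,T];L^p)$: the map $f\mapsto\|f\|_p^p$ is Fréchet differentiable on $L^p$ for $p>1$, with derivative $h\mapsto p\langle\phi(f),h\rangle$, so the chain rule applies. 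I expect this to be routine and will take it as given, as the paper does.

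The endpoint cases $p=1$ and $p=\infty$ are where $\phi$ fails to be differentiable (or the norm is not of integral type), and I expect them to be the main obstacle. My approach is a limiting argument.

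For $p=\infty$, $u_0\in L^\infty$ lies in every $L^q$, $q<\infty$. Since $\cL^W$ is bounded on each $L^q$ by \eqref{eq:DiffusionOperatorBd}, the solution is $e^{t\cL^W}u_0$ in every space, so it is the same function. Then $\|u(t)\|_q\le\|u_0\|_q$, and letting $q\to\infty$ on the finite measure space $(0,1)$ gives $\|u(t)\|_\infty\le\|u_0\|_\infty$. Alternatively, one could argue directly: $\partial_t u\le \int W(\sup u-u)\,dy$ at points where $u$ is near its essential supremum, a comparison argument. I would use the limiting version.

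For $p=1$, first take $u_0\in L^1\cap L^2$. The bound holds for every $q\in(1,2]$, and $\|f\|_q\to\|f\|_1$ as $q\downarrow1$ for $f\in L^2$, which gives the $L^1$ bound for such data. Then extend by density, since $e^{t\cL^W}$ is bounded on $L^1$ and hence continuous there.
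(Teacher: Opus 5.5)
Your proposal is correct and follows the paper's proof. Both differentiate $\|u(\cdot,t)\|_p^p$, symmetrize using $W(x,y)=W(y,x)$, conclude from the monotonicity of $a\mapsto|a|^{p-2}a$, and obtain $p=\infty$ by letting $p\to\infty$. The only real difference is at $p=1$: the paper runs the same computation with $\varphi=\mathrm{sign}$, which is formal because $\|\cdot\|_1$ need not be differentiable there, while your $q\downarrow1$ limit plus density argument makes that endpoint rigorous. Your integrability check for the symmetrization is also a detail the paper omits.
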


\begin{proof}  
First let $p \in [1,\infty)$.  With $u(\cdot,t) \in L^p((0,1))$, we also have $\cL^W u(\cdot,t)$. Then we have 
\begin{align*}
    &\frac{\partial}{\partial t} \|u(\cdot,t)\|^p_p \\
    =& p \left \langle \left|u(\cdot,t) \right|^{p-2} u(\cdot,t), \, \frac{\partial}{\partial t} u(\cdot,t) \right \rangle 
    \\
    =& p \int_0^1 \int_0^1 W(x,y) \left[ u(y,t) - u(x,t) \right] \left|u(x,t) \right|^{p-2} u(x,t) \, dy \, dx 
    \\
    =& \frac{p}{2} \int_0^1  \int_0^1 W(x,y) \left[ u(y,t) - u(x,t) \right] \left|u(x,t) \right|^{p-2} u(x,t) \, dy \, dx  
    \\
    &\qquad + \frac{p}{2} \int_0^1 \int_0^1 W(y,x) \left[ u(x,t) - u(y,t) \right] \left|u(y,t) \right|^{p-2} u(y,t)\, dy \, dx 
    \\
    =& - \frac{p}{2} \int_0^1  \int_0^1 W(x,y) \left[ u(y,t) - u(x,t) \right] \left[ \left|u(y,t) \right|^{p-2} u(x,t) - \left|u(x,t) \right|^{p-2} u(x,t) \right] \, dy \, dx 
    \\
    \leq& 0 \,.
\end{align*} 
The last inequality is due to the fact that the function $a \mapsto |a|^{p-2} a$ is monotone, which implies that the quantities $ \left[ u(y,t) - u(x,t) \right]$ and $\left[ \left|u(y,t) \right|^{p-2} u(x,t) - \left|u(x,t) \right|^{p-2} u(x,t) \right]$ have the same sign. 

Because we have shown that $\|u(\cdot,t)\|^p_p $ is decreasing as a function of $t$, we have that 
\begin{equation}\label{eq: boundedness of semigroup}
    \left\|u(\cdot,t) \right\|_p \leq \left\| u_0 \right\|_p \,.
\end{equation}
This completes the theorem when $p < \infty$. The $p = \infty$ case is obtained by taking the inequality \eqref{eq: boundedness of semigroup} to the $p \ra \infty$ limit. 
\end{proof} 

A consequence of \Cref{lem: stability of semigroup} is that solutions of the graphon diffusion equation are unique. Existence of the solutions follows from the existence of the semigroup $\{e^{t\calL^W}\}_{t\geq 0}$. Therefore, we conclude the following well-posedness result. 

\begin{theorem}\label{thm:WellDefinedSemigroup}
    Let $T > 0$ and $p \in [1,\infty]$. Let $u_0 \in L^p((0,1))$. Then there exists a unique solution $u \in C^1([0,T];L^p((0,1)))$ to the graphon diffusion equation \eqref{eq: graphon diffusion equation} with initial condition $u(x,0) = u_0(x)$.
    The solution is given by $u(\cdot,t) = e^{t\cL^W } u_0$ for all $t \geq 0$.
    Moreover, 
    we have the contraction estimate
\begin{equation}\label{eq:Semigroup:LpStab}
        \big\| e^{t\cL^W} u_0 
        \big\|_p \leq \| u_0 
        \|_p \qquad \forall t \geq 0 \,.
    \end{equation}
\end{theorem}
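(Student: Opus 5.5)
Since \eqref{eq:DiffusionOperatorBd}--\eqref{eq:DiffusionOperatorBd2} show that $\cL^W$ is a bounded linear operator on $L^p((0,1))$ with $\|\cL^W\|_{L^p\to L^p}\le 2$, I will build the semigroup directly as the norm-convergent exponential series
\begin{equation*}
e^{t\cL^W} = \sum_{k=0}^\infty \frac{t^k (\cL^W)^k}{k!} \,.
\end{equation*}
This series converges in operator norm, uniformly for $t$ in compact sets, since its terms are dominated by $(2t)^k/k!$. Term-by-term differentiation, justified by uniform convergence of the differentiated series, then gives $\frac{d}{dt} e^{t\cL^W} = \cL^W e^{t\cL^W}$ in operator norm. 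The function $t\mapsto e^{t\cL^W}u_0$ is continuous into $L^p$, and so is its derivative, so $u(\cdot,t) := e^{t\cL^W}u_0$ lies in $C^1([0,T];L^p((0,1)))$. It solves \eqref{eq: graphon diffusion equation} with $u(0)=u_0$. This settles existence for every $p\in[1,\infty]$, including $p=\infty$, because the argument only uses boundedness of $\cL^W$ and not strong continuity of a general $C_0$-semigroup.

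For uniqueness I will use linearity together with \Cref{lem: stability of semigroup}. Suppose $u,v\in C^1([0,T];L^p)$ are two solutions with the same initial datum. Then $w=u-v$ is a classical solution of \eqref{eq: graphon diffusion equation} with $w(0)=0$. The lemma gives $\sup_{t\in[0,T]}\|w(\cdot,t)\|_p\le 0$, so $w\equiv 0$. As a backup that avoids the lemma's differentiation of $\|w\|_p^p$, Gr\"onwall's inequality works just as well: from $w(t)=\int_0^t\cL^W w(s)\,ds$ we get $\|w(t)\|_p\le 2\int_0^t\|w(s)\|_p\,ds$, which forces $w=0$.

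The contraction estimate \eqref{eq:Semigroup:LpStab} then follows by applying \Cref{lem: stability of semigroup} to the solution $u(\cdot,t)=e^{t\cL^W}u_0$ on $[0,T]$ for arbitrary $T$. Hence $\|e^{t\cL^W}u_0\|_p\le\|u_0\|_p$ for all $t\ge0$.

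The main obstacle is the rigor of the lemma itself at the endpoints, not the structure of the argument. At $p=1$ the weight $|u|^{p-2}u$ becomes $\operatorname{sgn}(u)$, and $\|u\|_1$ need not be differentiable in $t$. At $p=\infty$ the lemma passes to a limit in $p$, which requires $u(\cdot,t)\in L^q$ for all large $q$; this holds on $(0,1)$ when $u_0\in L^\infty$, because the same exponential series converges in every $L^q$. Instead of leaning on the lemma at these endpoints, I would prove the $p=1$ and $p=\infty$ contraction directly. One route is to note that $e^{t\cL^W}=e^{-t d_W}\,\cdot\,$(a positive series), so it is a positive operator that fixes constants. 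Since $\cL^W 1=0$, this gives $\|e^{t\cL^W}\|_{\infty\to\infty}\le1$. By the symmetry of $W$ the semigroup is self-adjoint, so duality gives the $L^1$ bound. Riesz--Thorin interpolation then covers all intermediate $p$, which independently confirms \eqref{eq:Semigroup:LpStab}.
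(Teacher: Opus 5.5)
Your main line is the paper's argument. Boundedness of $\cL^W$ from \eqref{eq:DiffusionOperatorBd}--\eqref{eq:DiffusionOperatorBd2} gives a uniformly continuous semigroup, which is the norm-convergent exponential series the paper cites from Pazy, and uniqueness and \eqref{eq:Semigroup:LpStab} then come from \Cref{lem: stability of semigroup}.

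You depart from the paper only at the endpoints. Your Gr\"onwall uniqueness argument and your positivity--duality--interpolation proof of the contraction never differentiate $\|u(\cdot,t)\|_1$. The paper's proof of the lemma does differentiate it at $p=1$ through the chain-rule identity, without justification. Your route is therefore more rigorous there, and it shows $e^{t\cL^W}$ is a positive, mass- and constant-preserving operator. The paper's energy computation buys something different: it is reused, with $\varphi(e_n)$ in place of $\varphi(u)$, to control term $(\mathrm{I})$ in the convergence theorems, and the positivity argument does not supply that.

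One correction to your positivity step. Multiplication by $d_W$ does not commute with $T_Wf=\int_0^1W(\cdot,y)f(y)\,dy$ unless $d_W$ is constant. Hence $e^{t\cL^W}\neq e^{-td_W}e^{tT_W}$ in general; the two differ already at order $t^2$ by $\tfrac{t^2}{2}[d_W,T_W]$. Your factorization is correct only if the ``positive series'' means the time-ordered (Dyson) expansion. The clean fix uses $d_W\le 1$. Write $\cL^W=B-I$ with $Bf=T_Wf+(1-d_W)f$, which is a positive operator. Then $e^{t\cL^W}=e^{-t}\sum_{k\ge 0}t^kB^k/k!$ is manifestly positive, and it fixes constants because $\cL^W 1=0$.
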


%Theorem 1.2 of chapter 1 of Pazy semigroups says that a linear operator $A$ is the infinitesimal generator of a uniformly continuous semigroup iff $A$ is a bounded linear operator. Our $A$ is $\calL^{W}$ and it is linear and bounded, hence it is the infinitesimal generator of a uniformly continuous semigroup $e^{\calL^W t}$. 

%The following lemma shows the stability property of the semigroup. This property is also called nonexpansiveness, and in the $p=\infty$ case, it is also called the maximum property. 

Next, we consider the convergence of semigroup operators given the convergence of their defining graph kernels. Theorem \ref{thm: diffusion convergence} extends \cite[Theorem 4.1]{medvedev2014nonlinear} to the case of cut norm convergence. Specifically, \cite[Theorem 4.1]{medvedev2014nonlinear} proves that the solutions of the graph diffusion equations converge to the solution of a limiting graphon diffusion equation in the case when the graphs converge in $L^2$ norm to the limiting graphon. We prove a similar result but assume that the graphs converge in cut norm to the limiting graphon, which is a weaker assumption (see equation \eqref{eq: norm inequalities}). 

\begin{theorem} \label{thm: diffusion convergence}
Let $p \in [1,\infty)$ and let $T > 0$. Assume that $W$ and $W_n$ satisfy \Cref{assump:W2}ii) and let $u_n \in C^1([0,T];L^p((0,1)))$ be the solution to the graph diffusion equation \eqref{eq: graph diffusion equation} and let $u \in C^1([0,T];L^\infty((0,1)))$ be the solution to the graphon diffusion equation \eqref{eq: graphon diffusion equation}; let their initial conditions satisfy \Cref{assump: u0}iii).
Then, 
\begin{equation} \label{eq: diffusion convergence bound}
    \|u(\cdot, t) - u_n(\cdot, t)\|_p \leq \|u(\cdot, 0) - u_n(\cdot, 0)\|_p + 4t \|u(\cdot,0)\|_\infty \|W_n-W\|_\square^{1/p} \,.
\end{equation}
\end{theorem}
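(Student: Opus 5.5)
The plan is a Duhamel (variation-of-constants) argument for the difference $e(t) := u(\cdot,t) - u_n(\cdot,t)$, combined with the $L^p$ contraction of \Cref{thm:WellDefinedSemigroup}. Subtracting \eqref{eq: graph diffusion equation} from \eqref{eq: graphon diffusion equation} gives
\begin{equation*}
    \partial_t e = \calL^{W_n} e + (\calL^{W} - \calL^{W_n}) u \,,
\end{equation*}
so that
\begin{equation*}
    e(t) = e^{t\calL^{W_n}} e(0) + \int_0^t e^{(t-s)\calL^{W_n}} \big(\calL^{W}-\calL^{W_n}\big) u(\cdot,s)\,ds \,.
\end{equation*}
This identity is legitimate because $\calL^{W_n}$ is bounded on $L^p$ and $u\in C^1([0,T];L^\infty)\subset C^1([0,T];L^p)$. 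Taking $L^p$ norms and applying the contraction estimate \eqref{eq:Semigroup:LpStab} for $W_n$, which satisfies \Cref{assump:W}, yields
\begin{equation*}
    \|e(t)\|_p \le \|e(0)\|_p + \int_0^t \big\|(\calL^{W}-\calL^{W_n}) u(\cdot,s)\big\|_p\,ds \,.
\end{equation*}
It therefore suffices to show the pointwise-in-time bound $\|(\calL^{W}-\calL^{W_n})u(\cdot,s)\|_p \le 4\|u(\cdot,0)\|_\infty \|W-W_n\|_\square^{1/p}$. By \Cref{lem: stability of semigroup} with $p=\infty$ we have $\|u(\cdot,s)\|_\infty\le \|u(\cdot,0)\|_\infty =: M$ for all $s$, which reduces the claim to an estimate on a fixed function $v=u(\cdot,s)$ with $\|v\|_\infty\le M$.

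For the key estimate, write $U = W - W_n$ and
\begin{equation*}
    g(x) := (\calL^{U} v)(x) = \int_0^1 U(x,y) v(y)\,dy - v(x)\int_0^1 U(x,y)\,dy \,.
\end{equation*}
We first get an $L^\infty$ bound. Since $d_W, d_{W_n}\le 1$, each of the two terms is bounded by $2M$ in absolute value, so $\|g\|_\infty\le 4M$; a sharper count is likely possible but is not needed. Next we get an $L^1$ bound through the cut norm. Set $\phi(x)=\operatorname{sgn} g(x)\in[-1,1]$. Then
\begin{equation*}
    \|g\|_1=\int\!\!\int U(x,y)\,\phi(x) v(y)\,dy\,dx - \int\!\!\int U(x,y)\,\phi(x)v(x)\,dy\,dx \,.
\end{equation*}
Both terms have the form $\int\!\!\int U\,\phi\,\psi$ with $\|\phi\|_\infty,\|\psi\|_\infty\le 1$, up to the factor $M$: in the first, $\phi(x)$ is paired with $v(y)/M$; in the second, $\phi(x)v(x)/M$ is paired with $1$. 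By \eqref{eq: cut norm 1}, $\|g\|_1\le 2M\|U\|_\square$. Interpolating gives
\begin{equation*}
    \|g\|_p^p\le \|g\|_\infty^{p-1}\|g\|_1 \le (4M)^{p-1}\,2M\,\|U\|_\square \,,
\end{equation*}
and hence $\|g\|_p\le 4M\|U\|_\square^{1/p}$ (using $2\le 4$). Integrating in $s$ over $[0,t]$ gives exactly \eqref{eq: diffusion convergence bound}.

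The main obstacle is this key estimate: the cut norm controls $U$ only when it is tested against bounded functions of $x$ and $y$ separately. That is why it matters that $u$ stays uniformly bounded in $L^\infty$ via \Cref{lem: stability of semigroup}. It is also why we need the sign-function trick to turn the $L^1$ norm of $g$ into a bilinear form, and then the interpolation with the crude $L^\infty$ bound, which produces the exponent $1/p$. A secondary point to check is that \eqref{eq: cut norm 1} applies to the $L^1$ kernel $U$, which holds since all the test functions used are bounded.
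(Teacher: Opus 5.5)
Your proof is correct, and it takes a different route from the paper's.

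\textbf{The two routes.} The paper uses an energy identity. It differentiates $\|e_n(\cdot,t)\|_p^p$ and discards $\int_0^1\varphi(e_n)\,\calL^{W_n}e_n\,dx$ by monotonicity of $\varphi(a)=|a|^{p-2}a$; this is the differential form of the contraction you invoke through Duhamel. It then symmetrizes the cross term $\int_0^1\varphi(e_n)\,\calL^{W_n-W}u\,dx$, applies H\"older with the weight $|W_n-W|$, and closes with Gronwall. You instead write the error in mild form and use the $L^p$ contraction of $e^{t\calL^{W_n}}$ as a black box. That reduces everything to one static estimate, $\|\calL^{W-W_n}v\|_p\le 4M\|W-W_n\|_\square^{1/p}$ for $\|v\|_\infty\le M$, which you obtain by interpolating a degree-based $L^\infty$ bound with a cut-norm $L^1$ bound.

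\textbf{What each buys.} The energy method does not rely on a linear propagator, so it adapts more directly to the nonlinear-diffusion extension in the paper's remarks. Your route has a more substantive advantage: it genuinely uses the cut norm. In the paper's H\"older step the kernel enters as $|W_n-W|$. The quantity $\int_0^1\!\int_0^1|W_n-W|\,|u(y,t)-u(x,t)|^p\,dy\,dx$ is controlled by $(2M)^p\|W_n-W\|_1$, but not by $\|W_n-W\|_\square$. For example, take an Erd\H{o}s--R\'enyi graph with edge probability $1/2$ and $W\equiv 1/2$: the cut norm tends to $0$, while $|W_n-W|\equiv 1/2$. Your sign trick tests the signed kernel against the product pairs $(\phi(x),\,v(y)/M)$ and $(\phi(x)v(x)/M,\,-1)$ before any absolute values appear. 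That is exactly what \eqref{eq: cut norm 1} controls, and you use $|W-W_n|\le W+W_n$ only for the $L^\infty$ factor. So your argument delivers the stated cut-norm rate, whereas the paper's chain of inequalities as written yields it only with $\|W_n-W\|_1$ in place of $\|W_n-W\|_\square$.

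Your lemma would also repair the energy proof. H\"older in $x$ alone gives the paper's term $(\mathrm{II})\le\|e_n\|_p^{p-1}\|\calL^{W_n-W}u\|_p$, and your estimate then finishes. So the decisive ingredient is your key estimate, not the choice of Duhamel over energy.
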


\begin{proof}
Define the error $e_n(x,t) = u_n(x,t) - u(x,t)$. We will show that the $L^p$ norm of the error decreases in $t$. Due to the definitions of the graph and graphon diffusion equation \eqref{eq: graphon diffusion equation}--\eqref{eq: graph diffusion equation}, we have 
\begin{align*}
    \frac{\partial e_n}{\partial t}(x,t) &= \frac{\partial u_n}{\partial t}(x,t) - \frac{\partial u}{\partial t}(x,t)
    \\
    &= \int_0^1 W_n(x,y)\big(u_n(y,t) - u_n(x,t)\big) \, dy - \int_0^1 W(x,y)\big(u(y,t) - u(x,t)\big) \, dy
    \\
    &= \int_0^1 W_n(x,y)\big(e_n(y,t)-e_n(x,t)\big)\, dy + \int_0^1 (W_n-W)(x,y) \big(u(y,t)-u(x,t)\big)\, dy \,.\numberthis \label{eq: partial deriv of e_n}
\end{align*}
We can express \eqref{eq: partial deriv of e_n} as 
\begin{equation}\label{eq: time deriv of e_n}
    \frac{\partial e_n}{\partial t} = \calL^{W_n} e_n + \cL^{W_n-W}u\,.
\end{equation}

Equipped with this fact, we now compute the time-derivative of $ \| e_n(\cdot,t) \|_p$. 
Using the chain rule, we obtain
\begin{equation}\label{eq: chain rule derivative of Lp}
    \frac{\partial}{\partial t} \left[ \| e_n(\cdot,t) \|_p \right] = \frac{\partial}{\partial t} \left[ \left( \| e_n(\cdot,t) \|_p^p \right)^{1/p} \right]
    =  \frac{1}{p} \| e_n(\cdot,t) \|_p^{1-p} \frac{\partial}{\partial t} \left[ \| e_n(\cdot,t) \|_p^p \right]
\end{equation}
because it is more straightforward to take the derivative of $\|e_n(\cdot, t)\|^p_p$ than of $\|e_n(\cdot, t)\|_p$. Define $\varphi(a) = |a|^{p-2} a$ for $a \in \R$. Using equations \eqref{eq: time deriv of lp norm} and \eqref{eq: time deriv of e_n}, we obtain 
\begin{align*}
    \frac{1}{p} \frac{\partial}{\partial t} \|e_n(\cdot, t)\|_p^p 
    &= \frac{1}{p} p \left \langle \varphi(e_n(\cdot,t)), \frac{\partial}{\partial t} e_n(\cdot, t) \right \rangle 
    \\
    &= \int_0^1 \varphi(e_n(x,t))
    \, \calL^{W_n} e_n(x,t) \, dx  + \int_0^1 \varphi(e_n(x,t))
    \, \cL^{W_n-W}u(x,t) \, dx
    \\
    &=: (\mathrm{I}) + (\mathrm{II}) \,. \numberthis \label{eq: (I) + (II)}
\end{align*}
We know that $(\mathrm{I}) \leq 0$, because a calculation similar to that of the proof of \Cref{lem: stability of semigroup} gives
\begin{align*}
    (\mathrm{I})
    =& \int_0^1 \int_0^1 W_n(x,y) \left[ e_n(y,t) - e_n(x,t) \right] 
     \varphi(e_n(x,t) ) \, dy \, dx 
    \\
    =& \frac{1}{2} \int_0^1  \int_0^1 W_n(x,y) \left[ e_n(y,t) - e_n(x,t) \right] 
     \varphi(e_n(x,t)) \, dy \, dx  
    \\
    &\qquad + \frac{1}{2} \int_0^1 \int_0^1 W_n(y,x) \left[ e_n(x,t) - e_n(y,t) \right] \varphi(e_n(y,t))  
    \, dy \, dx 
    \\
    =& - \frac{1}{2} \int_0^1  \int_0^1 W_n(x,y) \left[ e_n(y,t) - e_n(x,t) \right] \left[ 
         \varphi(e_n(y,t))
         - \varphi(e_n(x,t)) \right] \, dy \, dx 
    \\
    \leq& 0. \numberthis \label{eq: bound on (I)}
\end{align*}

Next, we bound (II). 
Let $q = p/(p-1)$. We may first rewrite (II) as
\begin{align*}
    (\mathrm{II}) &= \int_0^1 \varphi(e_n(x,t)) (\cL^{W_n-W} u(x,t)) dx 
    \\
    &= \frac{1}{2} \int_0^1 \int_0^1 (W_n-W)(x,y) (u(y,t)-u(x,t)) \varphi(e_n(x,t)) \, dydx
    \\
    &\qquad + \frac{1}{2} \int_0^1 \int_0^1 (W_n-W)(y,x) (u(x,t)-u(y,t)) \varphi(e_n(y,t)) \, dydx
    \\
    &= - \frac{1}{2} \int_0^1 \int_0^1 (W_n-W)(x,y) (u(y,t)-u(x,t)) \big( \varphi(e_n(y,t))-\varphi(e_n(x,t)) \big) \, dydx.   \numberthis\label{eq: bound II}
    \end{align*}
    Using Holder's inequality, the definition of the cut norm stated in \eqref{eq: cut norm 1}, 
and the fact that $\|u(\cdot, t)\|_\infty \leq \|u(\cdot, 0)\|_\infty$ due to \Cref{lem: stability of semigroup}, 
we have 
    \begin{align*}   
    (\mathrm{II})
    &\leq \frac{1}{2} \left( \int_0^1 \int_0^1 | (W_n-W)(x,y)|\, |u(y,t)-u(x,t)|^p \, dy dx \right)^{1/p}
    \\
    &\qquad \cdot 
    \left( \int_0^1 \int_0^1| (W_n-W)(x,y)| \, |\varphi(e_n(y,t))-\varphi(e_n(x,t))|^q \, dy dx \right)^{1/q}
    \\
    &= \frac{1}{2} \left( \int_0^1 \int_0^1 | (W_n-W)(x,y)|\, |u(y,t)-u(x,t)|^p \, dy dx \right)^{1/p}\cdot (\mathrm{III})
    \\
    &\leq \| u(\cdot,t) \|_\infty \|W_n - W\|_\square^{1/p}\cdot (\mathrm{III}) 
    \\
    &\leq \| u(\cdot,0) \|_\infty \|W_n - W\|_\square^{1/p}\cdot (\mathrm{III}) \,. \numberthis \label{eq: term ii lastline}
\end{align*}
Now we use the triangle inequality and the bound on the degree $d_W(x)$ (from \Cref{assump: u0}iii)) to bound the term $(\mathrm{III})$:
\begin{align*}
        (\mathrm{III}) &\leq \left( \int_0^1 \int_0^1 |(W_n-W)(x,y)| |\varphi(e_n)(y,t)|^q \, dy dx \right)^{1/q}
        \\
        &\quad + \left( \int_0^1 \int_0^1 |(W_n-W)(x,y)| |\varphi(e_n)(x,t)|^q \, dy dx \right)^{1/q}
        \\
        &= \left( \int_0^1 \int_0^1 |(W_n-W)(x,y)| dx |e_n(y,t)|^p \, dy \right)^{(p-1)/p}
        \\
        &\quad + \left( \int_0^1 \int_0^1 |(W_n-W)(x,y)| dy |e_n(x,t)|^p \, dx \right)^{(p-1)/p}
        \\
        &\leq 4 \| e_n(\cdot,t) \|_p^{p-1}. \numberthis \label{eq: term iii lastline}
\end{align*}
Substituting \eqref{eq: bound on (I)}, \eqref{eq: term ii lastline}, and \eqref{eq: term iii lastline} into \eqref{eq: (I) + (II)}, we have 
\begin{equation*}
    \frac{1}{p} \frac{\partial}{\partial t} \|e_n(\cdot, t)\|_p^p \leq 4 \| u(\cdot,0) \|_\infty \|
    W_n - W \|_\square^{1/p} \| e_n(\cdot,t) \|_p^{p-1}\,.  
\end{equation*}
Using the identity \eqref{eq: chain rule derivative of Lp}, this becomes 
\begin{equation}\label{eq: l^p diffusion error bound}
    \frac{\partial}{\partial t}  \|e_n(\cdot,t) \|_p  \leq 4 \| u(\cdot,0) \|_\infty \|
    W_n - W \|_\square^{1/p}.
\end{equation}
Gronwall's inequality applied to \eqref{eq: l^p diffusion error bound} gives the desired bound \eqref{eq: diffusion convergence bound}.
\end{proof}

\begin{remark}
    One can generalize Theorem \ref{thm: diffusion convergence} to nonlinear diffusion, in which the term $u(y,t) - u(x,t)$ in the diffusion operator is replaced by $D(u(y,t) - u(x,t))$ where $D: \R \ra \R$ is a Lipschitz-continuous function.  \cite[Theorem 4.1]{medvedev2014nonlinear} holds for nonlinear diffusion.
\end{remark}

\begin{remark}\label{remark: medvedev for any convex function}
One can generalize Theorem \ref{thm: diffusion convergence} to any convex function of $u_n-u$ (in Theorem \ref{thm: diffusion convergence}, the convex function is $\|\cdot \|_p$). A convex function is needed because, in order to bound term $(\mathrm{I})$, the function $\phi$ must be monotonic. 

\end{remark}

When considering $L^\infty$ graphons, we have a slightly different result under stronger assumptions.

\begin{theorem}\label{thm: L infty diffusion convergence}
    Let $p\in [1,\infty)$ and $T>0$. Assume that $W_n$ and $W$ satisfy \Cref{assump:W2}i). Let $u_n \in C^1([0,T];L^p((0,1)))$ be the solution to the graph diffusion equation \eqref{eq: graph diffusion equation} and let $u \in C^1([0,T];L^\infty((0,1)))$ be the solution to the graphon diffusion equation \eqref{eq: graphon diffusion equation}; let their initial conditions satisfy \Cref{assump: u0}iii). 
    Then, 
    \begin{equation*}
        \|u(\cdot, t) - u_n(\cdot, t)\|_\infty \leq \|u(\cdot, 0) - u_n(\cdot, 0)\|_\infty + 2 t \|u(\cdot,0)\|_\infty \|W_n-W\|_\infty\,.
    \end{equation*}
\end{theorem}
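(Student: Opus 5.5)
We follow the strategy of \Cref{thm: diffusion convergence}, working directly in $L^\infty$. Set $e_n = u_n - u$. The computation leading to \eqref{eq: time deriv of e_n} only used linearity, so it gives
\[
\frac{\partial e_n}{\partial t} = \calL^{W_n} e_n + \calL^{W_n - W} u .
\]
Instead of estimating $\|e_n\|_\infty$ directly, we write $e_n$ in mild (Duhamel) form using the semigroup from \Cref{thm:WellDefinedSemigroup}:
\[
e_n(\cdot,t) = e^{t\calL^{W_n}} e_n(\cdot,0) + \int_0^t e^{(t-s)\calL^{W_n}} \calL^{W_n-W} u(\cdot,s)\, ds .
\]
This is justified because $\calL^{W_n}$ is a bounded operator on $L^\infty((0,1))$ by \eqref{eq:DiffusionOperatorBd2}. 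Since $W_n$ satisfies \Cref{assump:W}, the semigroup is a contraction on $L^\infty$ by \eqref{eq:Semigroup:LpStab} with $p=\infty$. Taking $L^\infty$ norms therefore gives
\[
\|e_n(\cdot,t)\|_\infty \le \|e_n(\cdot,0)\|_\infty + \int_0^t \|\calL^{W_n-W} u(\cdot,s)\|_\infty\, ds .
\]

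The remaining step is a pointwise bound on the forcing term. For a.e. $x$,
\[
|\calL^{W_n-W}u(x,s)| \le \int_0^1 |(W_n-W)(x,y)|\,|u(y,s)-u(x,s)|\,dy \le 2\|W_n-W\|_\infty \|u(\cdot,s)\|_\infty .
\]
By \Cref{lem: stability of semigroup} with $p=\infty$, $\|u(\cdot,s)\|_\infty \le \|u(\cdot,0)\|_\infty$. Integrating over $s\in[0,t]$ then gives exactly the claimed bound with constant $2t$.

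The point that needs care is the regularity of $u_n$. The statement only places $u_n$ in $C^1([0,T];L^p)$, while the argument uses $u_n(\cdot,0) \in L^\infty$. If $\|u_n(\cdot,0)-u(\cdot,0)\|_\infty = \infty$, the inequality is trivial. Otherwise $u_n(\cdot,0)\in L^\infty$, and \Cref{thm:WellDefinedSemigroup} with $p=\infty$ makes $u_n$ a $C^1([0,T];L^\infty)$ solution. By uniqueness in $L^p$, this is the same as the given $u_n$, so the Duhamel formula holds in $L^\infty$.

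As a fallback, one could redo the differential inequality of \Cref{thm: diffusion convergence} for finite $p$ with the pointwise bound above, obtaining $\tfrac{d}{dt}\|e_n\|_p \le 2\|u(\cdot,0)\|_\infty\|W_n-W\|_\infty$. Since the constant is independent of $p$, letting $p\to\infty$ recovers the $L^\infty$ estimate.
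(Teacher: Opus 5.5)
Your proof is correct. Your main argument takes a different route from the paper's, and the paper's proof is essentially your fallback.

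The paper reruns the $L^p$ energy computation of \Cref{thm: diffusion convergence}. Term (I) is discarded by symmetrization and the monotonicity of $\varphi$. Term (II) is bounded by H\"older on $(0,1)^2$ as $2\|u(\cdot,0)\|_\infty\|W_n-W\|_p\|e_n\|_p^{p-1}$, which gives $\frac{\partial}{\partial t}\|e_n\|_p\le 2\|u(\cdot,0)\|_\infty\|W_n-W\|_p$. The paper then integrates and lets $p\to\infty$. This uses $\|W_n-W\|_p\to\|W_n-W\|_\infty$ and $\|e_n\|_p\to\|e_n\|_\infty$. The latter holds even when $\|e_n\|_\infty=\infty$, which is how the paper tacitly handles $u_n(\cdot,0)\notin L^\infty$.

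Your Duhamel argument works in $L^\infty$ from the start. It replaces the monotonicity/energy step by the $L^\infty$ contraction of $e^{t\calL^{W_n}}$, and the H\"older step by a pointwise bound on the forcing. This frees you from differentiating $t\mapsto\|e_n(\cdot,t)\|_p$. That matters because \eqref{eq: chain rule derivative of Lp} divides by $\|e_n\|_p^{p-1}$, which needs a regularization when $e_n$ vanishes, e.g.\ in the natural case $u_n(\cdot,0)=u(\cdot,0)$. It also avoids the limit $p\to\infty$. The price is the regularity upgrade for $u_n$, which you supply correctly via uniqueness.

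Your pointwise bound also uses only $\operatorname{ess\,sup}_x\int_0^1|(W_n-W)(x,y)|\,dy\le\|W_n-W\|_\infty$, so you in fact prove the estimate with this smaller quantity. The paper's route, in turn, reuses the computation from the cut-norm theorem verbatim. Along the way it produces the finite-$p$ estimate $\|e_n(\cdot,t)\|_p\le\|e_n(\cdot,0)\|_p+2t\|u(\cdot,0)\|_\infty\|W_n-W\|_p$.
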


\begin{proof}
     The proof begins and proceeds identically to that of \Cref{thm: diffusion convergence} until (and including) equation \eqref{eq: bound II}. The remaining steps differ from the proof of \Cref{thm: diffusion convergence} in that we bound terms using the quantity $\|W_n-W\|_p$ instead of $\|W_n-W\|_\square$. We then take $p \to \infty$. 
    
    Beginning with the proof of \Cref{thm: diffusion convergence} up until equation \eqref{eq: bound II}, we obtain 
\begin{align*}
    \frac{1}{p} \frac{\partial}{\partial t} \|e_n(\cdot, t)\|_p^p 
    &\leq (\mathrm{II}) \,, 
\end{align*}
where 
\begin{equation*}
    (\mathrm{II}) = \int_0^1 \varphi(e_n(x,t))
    \, \cL^{W_n-W}u(x,t) \, dx
\end{equation*}
is the same term (II) that was defined in \eqref{eq: (I) + (II)}. We now bound (II) in terms of $\|W_n-W\|_p$, as opposed to $\|W_n-W\|_\square$ as was done in \Cref{thm: diffusion convergence}. 
\begin{align*}
    (\mathrm{II}) &= - \frac{1}{2} \int_0^1 \int_0^1 (W_n-W)(x,y) (u(y,t)-u(x,t)) \big( \varphi(e_n(y,t))-\varphi(e_n(x,t)) \big) \, dydx
    \\
    &\leq \| u(\cdot,t) \|_\infty \left( \int_0^1 \int_0^1 |(W_n-W)(x,y)|^p dydx \right)^{1/p} 
    \\
    &\qquad \qquad \cdot \left( \int_0^1 \int_0^1 |\varphi(e_n(x,t)) - \varphi(e_n(y,t)) |^{q} dy dx  \right)^{1/q}\\
    &\leq \| u(\cdot,t) \|_\infty \| W_n - W\|_p \big( 2 \| e_n(\cdot,t) \|_p^{p-1} \big)\\
    &\leq 2\| u(\cdot,0) \|_\infty \| W_n - W\|_p  \| e_n(\cdot,t) \|_p^{p-1} \numberthis \label{eq: term ii lastline infty}
    \end{align*}
    where the last line uses \Cref{lem: stability of semigroup}. 
    Combining the bound \eqref{eq: term ii lastline infty} with the identity \eqref{eq: chain rule derivative of Lp}, we obtain
    \begin{equation}\label{eq: l^infty diffusion error bound}
        \frac{\partial}{\partial t} \| e_n(\cdot,t) \|_p \leq 2 \| u(\cdot,0) \|_\infty \| W_n - W\|_p \,. 
    \end{equation}
    The desired result follows from applying Gronwall's inequality to \eqref{eq: l^infty diffusion error bound} and then taking $p \to \infty$.
\end{proof}

%%%%%%%%%%%%%%%%%%%%%%%%%%%%%%%
\subsection{Graphon reaction--diffusion equation}\label{sec: rd equation convergence}

Consider the RD equation \eqref{eq: classical RD}. 
A classical RD equation uses $L = \Delta$, the Laplacian operator \cite{britton1986reaction}. 
We define the graph and graphon RD equations by defining $L$ to be the graph and graphon diffusion operators, respectively. The graph RD equation, corresponding to a graph $W_n$, is 
\begin{equation} \label{eq: graph RD} 
    \frac{\partial u_n}{\partial t} (x,t) = \calL^{W_n} u_n(x,t) + \Phi(u_n(x,t))  \,,
\end{equation}
and the graphon RD equation, corresponding to a graphon $W$, is 
\begin{equation} \label{eq: graphon RD}
    \frac{\partial u}{\partial t} (x,t) = \calL^W u(x,t) + \Phi(u(x,t)) \,. 
\end{equation}

We consider the initial value problems associated with these equations and we obtain convergence of the solution of \eqref{eq: graph RD} to the solution of \eqref{eq: graphon RD} as $n\ra \infty$. 
We now state a well-posedness result for the equations, which follows from the boundedness of the graphon diffusion operator \cite[Theorems 1.4, 1.5 of Chapter 6.1]{pazy2012semigroups}. 
%Theorem 1.4 says: if $\Phi$ is locally Lipschitz then the IVP has a unique mild solution on $[0,t_{max})$ for some $t_{max}$. 
%Theorem 1.5 says: if in addition, $\Phi$ is continuously-differentiable in $t$, then the mild solution is a classical solution. (for us, $\Phi$ is constant in t)
\begin{theorem}\label{thm:RdWellPosed-infty}
    Assume that $\Phi$ satisfies \Cref{assump: phi}i).
    Let $u_0 \in L^\infty((0,1))$ with $\| u_0 \|_\infty \leq M$ for some constant $M > 0$. Then there exists $T_{max} > 0$ depending on $M$ and on the Lipschitz constant of $\Phi$ on $[-M,M]$ such that the graphon RD equation \eqref{eq: graphon RD} with initial condition $u(x,0) = u_0(x)$ has a unique classical solution $u \in C^1([0,T_{max}];L^\infty((0,1)))$ that satisfies the mild formulation 
    \begin{equation}\label{eq:RDSemigroupFormula}
        u(x,t) = e^{t \cL^W } u_0 + \int_0^t e^{ (t-s)\cL^W} \Phi(u(x,s)) \, ds\, .
    \end{equation}
    Moreover, if $T_{max} < \infty$, then $\lim\limits_{t \to T_{max}} \| u(\cdot,t) \|_\infty = + \infty$.
\end{theorem}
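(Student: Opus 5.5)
The argument is a standard Picard fixed point for the mild formulation \eqref{eq:RDSemigroupFormula} in $X = C([0,T];L^\infty((0,1)))$, followed by an upgrade to a classical solution and a continuation argument. Two facts from earlier carry the whole proof. First, $\cL^W$ is bounded on $L^\infty$ by \eqref{eq:DiffusionOperatorBd2}, so $\{e^{t\cL^W}\}$ is a uniformly continuous semigroup. Second, it is a contraction on $L^\infty$ by \Cref{thm:WellDefinedSemigroup}, i.e.\ $\|e^{t\cL^W}f\|_\infty \le \|f\|_\infty$.

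For the local fixed point, fix $R=2M$ and let $K_R$ be the Lipschitz constant of $\Phi$ on $[-R,R]$. Since $\Phi(0)=0$, we have $|\Phi(a)|\le K_R|a|$ there. The Nemytskii map $v\mapsto \Phi(v)$ sends the ball $B_R=\{v\in L^\infty:\|v\|_\infty\le R\}$ into $L^\infty$ and satisfies $\|\Phi(v)-\Phi(w)\|_\infty\le K_R\|v-w\|_\infty$. Define
\[
(\Gamma v)(t)=e^{t\cL^W}u_0+\int_0^t e^{(t-s)\cL^W}\Phi(v(s))\,ds
\]
on the closed set $\{v\in X: \sup_t\|v(t)\|_\infty\le R\}$. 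The contraction property gives $\|(\Gamma v)(t)\|_\infty\le M+tK_RR$ and $\|(\Gamma v)(t)-(\Gamma w)(t)\|_\infty\le tK_R\sup_s\|v(s)-w(s)\|_\infty$. Choosing $T_0=\min\{1/(2K_R),\,M/(K_RR)\}$, which depends only on $M$ and $K_R$, makes $\Gamma$ a self-map and a contraction. Continuity of $\Gamma v$ in $t$ with values in $L^\infty$ follows from norm continuity of the semigroup. Banach's fixed point theorem then yields a unique mild solution on $[0,T_0]$.

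For regularity, let $u$ be the mild solution. Then $s\mapsto \Phi(u(s))$ is continuous into $L^\infty$, because $\Phi$ is Lipschitz on the range and $u$ is continuous in $t$. Since $\cL^W$ is bounded, the integral term can be differentiated directly, giving $\partial_t u=\cL^W u+\Phi(u)$ in $L^\infty$ with a continuous right-hand side. Hence $u\in C^1$, which is the content of \cite[Theorems 1.4, 1.5 of Chapter 6.1]{pazy2012semigroups} in our setting. We don't need continuous differentiability of $\Phi$ in $t$, because $\Phi$ is autonomous and the generator is bounded. Conversely, any classical solution satisfies the mild formula by Duhamel's principle, $\frac{d}{ds}\big[e^{(t-s)\cL^W}u(s)\big]=e^{(t-s)\cL^W}\Phi(u(s))$. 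Uniqueness among classical solutions then follows from a Gronwall estimate on the difference, using the local Lipschitz bound on a common bound of the two solutions.

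For the continuation and blow-up alternative, restart the argument at time $T_0$ with initial datum $u(T_0)$. Let $T_{max}$ be the supremum of the times to which the solution extends. If $T_{max}<\infty$ but $\liminf_{t\to T_{max}}\|u(t)\|_\infty$ were finite, the local existence time would depend only on the $L^\infty$ bound. This is the main point: the existence time is uniform in the data size. Along a sequence $t_k\to T_{max}$ with bounded norms $\le M'$, one could therefore extend past $T_{max}$ by a fixed amount $T_0(M')$, a contradiction. This shows $\liminf=\infty$. To upgrade it to $\lim$, suppose $\|u(t_k)\|_\infty\le M'$ along some sequence; the same restart argument again extends the solution, a contradiction. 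The statement's ``$T_{max}$ depending on $M$'' should be read as: the solution exists at least up to $T_0(M,K_{2M})>0$, with $T_{max}$ the maximal time. The main care point is the bookkeeping that local Lipschitzness of $\Phi$ on $\R$ (Assumption \ref{assump: phi}i)) transfers to Lipschitzness of the Nemytskii operator on $L^\infty$-balls, which is exactly why the argument works in $L^\infty$ and not in $L^p$.
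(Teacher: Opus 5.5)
Your proposal is correct and follows the paper's route. The paper simply cites \cite[Theorems 1.4, 1.5 of Chapter 6.1]{pazy2012semigroups}, and those results consist of your contraction-mapping construction of a local mild solution in $C([0,T];L^\infty((0,1)))$ (with existence time controlled by the $L^\infty$ size of the data), the restart argument for the blow-up alternative, and the passage from mild to classical solutions. Two of your adjustments are improvements rather than bookkeeping: differentiating the Duhamel term directly via boundedness of $\cL^W$ is the right justification for classical regularity, since Pazy's Theorem 1.5 assumes a continuously differentiable nonlinearity, which a merely locally Lipschitz $\Phi$ does not provide; and using the Lipschitz constant on $[-2M,2M]$ rather than $[-M,M]$ is necessary, since for constant data $u_0\equiv M$ the equation reduces to $u'=\Phi(u)$, whose existence time cannot be bounded below in terms of $M$ and the behaviour of $\Phi$ on $[-M,M]$ alone.
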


With a stronger assumption on $\Phi$, we obtain the following global existence result for $p \in [1,\infty)$ \cite[Theorems 1.5, 1.7 of Chapter 6.1]{pazy2012semigroups}. 
%Theorem 1.5 stated above
%Theorem 1.7 says that if $\Phi$ is uniformly Lipschitz then the IVP has a unique classical solution on $[0,T]$. 

\begin{theorem}\label{thm:RdWellPosed-p}
    Suppose $\Phi$ satisfies \Cref{assump: phi}iii), let $p \in [1,\infty)$, and let $u_0 \in L^p((0,1))$. Then for any $T > 0$, the equation \eqref{eq: graphon RD} with initial condition $u(x,0) = u_0(x)$ has a unique classical solution $u \in C^1([0,T];L^p((0,1)))$ that satisfies the mild formulation \eqref{eq:RDSemigroupFormula}.
\end{theorem}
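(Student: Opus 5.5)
Since $\cL^W$ is a bounded linear operator on $L^p((0,1))$ by \eqref{eq:DiffusionOperatorBd}, the graphon RD equation \eqref{eq: graphon RD} is an ODE in the Banach space $X = L^p((0,1))$ with a globally Lipschitz right-hand side. The plan is a Picard/Banach fixed-point argument for the mild formulation \eqref{eq:RDSemigroupFormula}, followed by an upgrade from mild to classical solution.

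\textbf{Lipschitz property of the Nemytskii operator.} Under \Cref{assump: phi}iii), $|\Phi(a)-\Phi(b)|\le K|a-b|$ and $\Phi(0)=0$. Hence $u\mapsto \Phi(u)$, defined pointwise, maps $L^p$ into $L^p$ with $\|\Phi(u)\|_p\le K\|u\|_p$ and $\|\Phi(u)-\Phi(v)\|_p\le K\|u-v\|_p$.

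\textbf{Existence and uniqueness of a mild solution.} Define
\[ (\Gamma u)(t) = e^{t\cL^W}u_0 + \int_0^t e^{(t-s)\cL^W}\Phi(u(s))\,ds \]
on $C([0,T];L^p((0,1)))$. The semigroup is uniformly continuous and, by \eqref{eq:Semigroup:LpStab}, contractive. So $\Gamma$ maps this space into itself, since the integrand is continuous in $s$ into $L^p$. It also satisfies
\[ \|\Gamma u(t)-\Gamma v(t)\|_p\le K\int_0^t\|u(s)-v(s)\|_p\,ds. \]
Using the weighted norm $\sup_t e^{-2Kt}\|u(t)\|_p$ makes $\Gamma$ a contraction with constant $1/2$ on the whole interval $[0,T]$, for any $T$; alternatively one can iterate on short intervals. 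Either way there is a unique fixed point. Uniqueness among classical solutions follows from Gronwall's inequality, since every classical solution satisfies the mild formula (Duhamel's formula for a bounded generator).

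\textbf{Regularity.} Because $u\in C([0,T];L^p)$ and $\Phi$ is Lipschitz, $f(s):=\Phi(u(s))$ lies in $C([0,T];L^p)$. Since the generator is bounded, $t\mapsto e^{t\cL^W}$ is norm-differentiable. Differentiating the mild formula then gives
\[ u'(t)=\cL^W u(t)+f(t), \]
which is continuous in $t$, so $u\in C^1([0,T];L^p)$ is a classical solution.

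The only step requiring some care is this upgrade to a classical solution. In general semigroup theory it needs differentiability of $f$ (Pazy, Thm 6.1.5), but here the boundedness of $\cL^W$ makes a direct differentiation under the integral legitimate. Alternatively, one can apply the Picard–Lindelöf theorem directly to $u'=F(u)$ with $F=\cL^W+\Phi$, which is globally Lipschitz on $X$ with constant $2+K$, and obtain a $C^1$ solution on any $[0,T]$ at once.
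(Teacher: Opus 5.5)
Your proof is correct, and it differs from the paper mainly in being self-contained. The paper gives no argument of its own; it only cites Pazy's theory of Lipschitz perturbations of $C_0$-semigroups (Theorems 1.5 and 1.7 of Chapter 6.1). That means a mild solution from the uniform Lipschitz bound, followed by an abstract upgrade to a classical solution. Your route uses a weighted-norm contraction, Duhamel-based uniqueness among classical solutions, and direct differentiation of the mild formula via the norm-differentiability of $t\mapsto e^{t\cL^W}$. It also closes a weak point in the citation. Pazy's regularity result (Theorem 1.5 of Chapter 6.1) assumes the nonlinearity is continuously (Fr\'echet) differentiable on $[0,T]\times X$. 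The Nemytskii map $u\mapsto\Phi(u)$ of a merely Lipschitz $\Phi$ is in general not differentiable in this sense on $L^p((0,1))$. The reflexive-space variant would exclude $p=1$ and give only a.e.\ differentiability. Your Picard--Lindel\"of remark shows why none of this is needed: since $\cL^W$ is bounded, the problem is just a globally Lipschitz ODE in $L^p((0,1))$ with constant $2+K$, for every $p\in[1,\infty)$ (and even $p=\infty$). The citation is shorter and places the result in the general semigroup framework, but your argument is the one that directly covers the stated hypotheses.
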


When $p = \infty$, the local well-posedness can be extended to global well-posedness, even when $\Phi$ is only locally Lipschitz continuous, by proving a contraction estimate on $\| u(\cdot,t) \|_\infty$ which prevents the blow-up of the solution at any finite time. \Cref{thm:MaxPrinciple} states this fact; its proof is based on arguments from \cite[Theorem 2.3]{du2021maximum}.

\begin{theorem}\label{thm:MaxPrinciple}
    Let $\Phi$ satisfy \Cref{assump: phi}i) and \Cref{assump: phi}ii).
    Let $u$ be the unique classical solution to the graphon RD equation \eqref{eq: graphon RD} on $[0,T_{max}]$ with initial condition $u_0 \in L^\infty((0,1))$. If 
    \begin{equation}
    \label{eq:initial bound}
        M_1 \leq u_0(x)=u(x,0) \leq M_2\,, \qquad \text{ for a.e. } x \in (0,1)\,, 
    \end{equation}
    then for all $t\in [0,T_{max}]$, 
    \begin{equation*}
        M_1 \leq u(x, t) \leq M_2\,, \qquad \text{ for a.e. } x \in (0,1). 
    \end{equation*}
    Consequently, $u$ can be extended to a unique classical solution of \eqref{eq: graphon RD} on $[0,T]$ for any arbitrary $T > 0$.
\end{theorem}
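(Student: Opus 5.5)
The plan is to prove the upper bound $u \le M_2$ and the lower bound $u \ge M_1$ by symmetric arguments, following the splitting strategy of \cite[Theorem 2.3]{du2021maximum}. We rewrite the equation in a form where a stabilizing linear term makes the nonlinear part monotone, and then run a contraction/invariance argument through the mild formulation. The main tool is that both $e^{t\cL^W}$ and $e^{-\kappa t}e^{t\cL^W}$ preserve order and the box $[M_1,M_2]$. We then conclude global existence from the blow-up alternative in \Cref{thm:RdWellPosed-infty}.

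\textbf{Step 1: order and constants under the semigroup.} Write $\cL^W v = T_W v - d_W v$, where $T_W v(x)=\int_0^1 W(x,y)v(y)\,dy$ is a positive operator. Then $e^{t\cL^W} = e^{-t d_W}\ast$ (Dyson series in $T_W$) is positivity preserving. Since $\cL^W c = 0$ for constants, $e^{t\cL^W}c = c$. Hence $M_1\le v\le M_2$ implies $M_1\le e^{t\cL^W}v\le M_2$. One can check positivity directly: $\tilde v(x,t)=e^{t d_W(x)}v(x,t)$ satisfies $\partial_t\tilde v = e^{td_W}T_W(e^{-td_W}\tilde v)$, whose Picard iterates are nonnegative when $\tilde v(0)\ge0$. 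Alternatively one can deduce it from \Cref{lem: stability of semigroup} applied to $v^-$-type arguments.

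\textbf{Step 2: stabilized mild formulation.} Let $\kappa$ be the Lipschitz constant of $\Phi$ on $[M_1,M_2]$. Replace $\Phi$ by its truncation $\tilde\Phi(s)=\Phi(\min(\max(s,M_1),M_2))$, which is globally Lipschitz. Then $s\mapsto \kappa s+\tilde\Phi(s)$ is nondecreasing on $[M_1,M_2]$ and maps $[M_1,M_2]$ into $[\kappa M_1+\Phi(M_1),\kappa M_2+\Phi(M_2)]\subseteq[\kappa M_1,\kappa M_2]$ by \Cref{assump: phi}ii). Write the equation as $\partial_t u = (\cL^W-\kappa)u + (\kappa u+\tilde\Phi(u))$, with mild form
\[ u(t)=e^{-\kappa t}e^{t\cL^W}u_0+\int_0^t e^{-\kappa(t-s)}e^{(t-s)\cL^W}\big(\kappa u(s)+\tilde\Phi(u(s))\big)\,ds. \]
On the closed set $X=\{v\in C([0,\tau];L^\infty): M_1\le v\le M_2\}$ this map sends $X$ into $X$. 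Indeed, by Step 1 the right-hand side lies between $e^{-\kappa t}M_1+(1-e^{-\kappa t})M_1$ and the analogous expression with $M_2$. The map is also a contraction for small $\tau$, with constant $2\kappa\tau$. Its fixed point solves the truncated problem. It therefore coincides with the solution of \eqref{eq: graphon RD}, since the truncated and original nonlinearities agree on $[M_1,M_2]$, and by uniqueness in \Cref{thm:RdWellPosed-infty} (applied with $M=\max(|M_1|,|M_2|)$). Iterating on intervals of fixed length $\tau$, which depends only on $\kappa$, gives the bounds on all of $[0,T_{max}]$.

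\textbf{Step 3: global existence.} Since $\|u(\cdot,t)\|_\infty\le\max(|M_1|,|M_2|)$ never blows up, the alternative in \Cref{thm:RdWellPosed-infty} forces $T_{max}=\infty$. So the solution extends to any $[0,T]$.

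The main obstacle is Step 1, the positivity of $e^{t\cL^W}$ for merely $L^1$ kernels with $d_W\le1$. I would handle it with the explicit decomposition $-d_W+T_W$ and a Neumann/Picard series, which converges because both pieces are bounded on $L^\infty$ by \eqref{eq:DiffusionOperatorBd2}.
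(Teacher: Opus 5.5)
Your proposal is correct and is essentially the argument the paper defers to. The paper gives no proof of its own and only cites \cite[Theorem 2.3]{du2021maximum}; your version uses the same ingredients: the stabilized splitting $\partial_t u=(\cL^W-\kappa)u+(\kappa u+\Phi(u))$, a fixed-point and invariance argument on the box, and iteration in time, with the non-symmetric interval $[M_1,M_2]$ handled by order preservation and $e^{t\cL^W}c=c$. Two minor remarks: Step 1 is less of an obstacle than you suggest, since the $L^\infty$ contraction of \Cref{lem: stability of semigroup} applied to $v-\tfrac{M_1+M_2}{2}$ already gives invariance of $[M_1,M_2]$ under $e^{t\cL^W}$; and $e^{-\kappa t}e^{t\cL^W}$ alone does not preserve the box, only the full stabilized mild expression does, which is what your Step 2 computation actually uses.
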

Note that Theorem \ref{thm:MaxPrinciple} holds for both \eqref{eq: graph RD} and \eqref{eq: graphon RD} because \eqref{eq: graph RD} is a special case of \eqref{eq: graphon RD}.

\begin{theorem}[Convergence of solutions of graph RD to graphon RD equations]\label{thm: graph rd convergence to graphon rd}
    Let $p \in [1,\infty)$ and $T>0$.
    Let $\Phi$ satisfy \Cref{assump: phi}iii) and let $W_n$ and $W$ satisfy \Cref{assump:W2}ii). For each $W_n$, let  $u_n \in C^1([0,T];L^p((0,1)))$ be the unique classical solution to the graph RD equation \eqref{eq: graph RD} and let $u \in C^1([0,T];L^\infty((0,1)))$ be the unique classical solution to the graphon RD equation \eqref{eq: graphon RD}; let their initial conditions satisfy \Cref{assump: u0}iii). 
    Then, 
\begin{equation*} 
    \|u_n(\cdot, t) - u(\cdot, t)\|_p \leq  e^{Kt} \|u_n(\cdot, 0)-u(\cdot, 0)\|_p  + \frac{4M \left( e^{K t} - 1\right) }{K} \|W_n-W\|_\square^{1/p} \,,
\end{equation*}
where $M = \| u(\cdot,0) \|_\infty$.
\end{theorem}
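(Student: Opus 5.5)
We follow the energy argument from the proof of \Cref{thm: diffusion convergence} and treat the reaction term as a Lipschitz perturbation that Gronwall's inequality absorbs. Let $e_n = u_n - u$. Subtracting \eqref{eq: graphon RD} from \eqref{eq: graph RD} and adding and subtracting $\calL^{W_n} u$ gives
\begin{equation*}
    \frac{\partial e_n}{\partial t} = \calL^{W_n} e_n + \calL^{W_n - W} u + \big(\Phi(u_n) - \Phi(u)\big).
\end{equation*}
With $\varphi(a) = |a|^{p-2}a$, identity \eqref{eq: time deriv of lp norm} then yields
\begin{equation*}
    \frac{1}{p}\frac{\partial}{\partial t}\|e_n(\cdot,t)\|_p^p = (\mathrm{I}) + (\mathrm{II}) + (\mathrm{IV}),
\end{equation*}
where (I) and (II) are exactly the terms in \eqref{eq: (I) + (II)} and $(\mathrm{IV}) = \int_0^1 \varphi(e_n)\,\big(\Phi(u_n)-\Phi(u)\big)\,dx$.

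\textbf{Term (I).} The symmetrization argument in \eqref{eq: bound on (I)} uses only the symmetry and nonnegativity of $W_n$. It therefore applies verbatim and gives $(\mathrm{I}) \le 0$.

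\textbf{Term (IV).} By \Cref{assump: phi}iii), $|\Phi(u_n)-\Phi(u)| \le K|e_n|$. Hence
\begin{equation*}
    (\mathrm{IV}) \le K\int_0^1 |e_n|^{p-1}|e_n|\,dx = K\|e_n\|_p^p.
\end{equation*}

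\textbf{Term (II).} The computation in \eqref{eq: bound II}--\eqref{eq: term iii lastline} used only two facts about $u$. The first is its $L^\infty$ norm, through $|u(y,t)-u(x,t)|^p \le (2\|u(\cdot,t)\|_\infty)^p$, which lets the first H\"older factor be controlled by the cut norm via \eqref{eq: cut norm 1}. The second is the degree bound in \Cref{assump:W}. So we get
\begin{equation*}
    (\mathrm{II}) \le 4\,\|u(\cdot,t)\|_\infty\,\|W_n-W\|_\square^{1/p}\,\|e_n\|_p^{p-1}.
\end{equation*}

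\textbf{Gronwall step.} Dividing by $\|e_n\|_p^{p-1}$ via \eqref{eq: chain rule derivative of Lp} gives the differential inequality
\begin{equation*}
    \frac{\partial}{\partial t}\|e_n(\cdot,t)\|_p \le K\|e_n(\cdot,t)\|_p + 4\,\|u(\cdot,t)\|_\infty\,\|W_n-W\|_\square^{1/p}.
\end{equation*}
If $\sup_{t\le T}\|u(\cdot,t)\|_\infty \le M$, the linear Gronwall inequality (variation of constants) gives
\begin{equation*}
    \|e_n(t)\|_p \le e^{Kt}\|e_n(0)\|_p + 4M\,\|W_n-W\|_\square^{1/p}\int_0^t e^{K(t-s)}\,ds,
\end{equation*}
and the integral equals $(e^{Kt}-1)/K$, which is the claimed estimate.

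\textbf{Main obstacle.} The difficulty is replacing $\|u(\cdot,t)\|_\infty$ by $M=\|u(\cdot,0)\|_\infty$. In the pure diffusion case this came from \Cref{lem: stability of semigroup}, but the reaction term can increase the sup norm.

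I would first try to control $u$ through \Cref{thm:MaxPrinciple}: if $\Phi$ admits $M_1 \le -M$ and $M_2 \ge M$ with $\Phi(M_2)\le 0\le\Phi(M_1)$, then $\|u(\cdot,t)\|_\infty\le \max(|M_1|,|M_2|)$, which reduces to $M$ in the natural situation. Otherwise, I would bound the sup norm directly from the mild formulation \eqref{eq:RDSemigroupFormula}. Using \eqref{eq:Semigroup:LpStab} with $p=\infty$ and $|\Phi(v)|\le K|v|$ (since $\Phi(0)=0$), we get
\begin{equation*}
    \|u(t)\|_\infty \le M + K\int_0^t\|u(s)\|_\infty\,ds,
\end{equation*}
so $\|u(t)\|_\infty \le Me^{Kt}$. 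This yields the same structure with $M$ replaced by $Me^{KT}$ on $[0,T]$. I would therefore check carefully which sup bound the constant in the statement actually requires. In either case, $u\in L^\infty$ uniformly on $[0,T]$ is exactly what makes the cut-norm estimate of (II) go through, as in \Cref{thm: diffusion convergence}.
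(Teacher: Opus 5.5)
Your argument follows the paper's proof step for step: the same splitting into (I), (II), (IV) and the same Gronwall step. Both share two gaps, and you caught one of them.

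\textbf{The constant $M$.} Your worry is justified and in fact decisive. With $M=\|u(\cdot,0)\|_\infty$ the estimate is false under the stated hypotheses, so no argument can close this step. For $\Phi(a)=Ka$ one has $u_n-u=e^{Kt}(\tilde u_n-\tilde u)$, where $\tilde u_n,\tilde u$ solve the pure diffusion equations with the same data. Dividing the claimed bound by $e^{Kt}$ and letting $K\to\infty$ would give $\|\tilde u_n(t)-\tilde u(t)\|_p\le\|\tilde u_n(0)-\tilde u(0)\|_p$ for all $W_n,W$, which is absurd. Concretely:
\begin{itemize}
\item Take $W=0$, let $W_2$ be the single-edge graph on two nodes, set $W_n=0$ for $n\ne2$, and let $u_2(0)=u(0)=\mathbf{1}_{(0,1/2)}$, so $M=1$.
\item Then $\|u_2(t)-u(t)\|_p=\tfrac12(1-e^{-t})e^{Kt}$.
\item Since $\|W_2\|_\square\le\tfrac12$, the right-hand side is at most $4(e^{Kt}-1)/K$.
\item Hence the inequality fails at $t=1$ once $K\ge13$.
\end{itemize}
The paper obtains $M$ only by reusing $\|u(\cdot,t)\|_\infty\le\|u(\cdot,0)\|_\infty$ from \Cref{lem: stability of semigroup}. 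That lemma concerns the diffusion equation, not \eqref{eq: graphon RD}. Your fallback is therefore the right repair. Keeping $\|u(\cdot,s)\|_\infty\le Me^{Ks}$ inside the variation-of-constants integral gives $\|e_n(t)\|_p\le e^{Kt}\|e_n(0)\|_p+4Mte^{Kt}\|W_n-W\|_\square^{1/p}$. The stated form holds under the extra hypothesis $\Phi(M)\le0\le\Phi(-M)$, via \Cref{thm:MaxPrinciple}. Either version still suffices for term $(B)$ in the proof of \Cref{thm: lln in l2}.

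\textbf{Term (II).} This is the gap you did not notice: the justification you import from \eqref{eq: term ii lastline} is invalid. After H\"older with weight $|W_n-W|$ and $|u(y,t)-u(x,t)|\le2\|u(\cdot,t)\|_\infty$, the first factor is bounded only by $2\|u(\cdot,t)\|_\infty\|W_n-W\|_1^{1/p}$. The cut norm does not dominate the $L^1$ norm. Once the absolute value sits inside the integral, the cancellation that \eqref{eq: cut norm 1} measures is lost. For example, for Erd\H{o}s--R\'enyi graphs $G(n,1/2)$ and $W\equiv\tfrac12$, $\|W_n-W\|_\square\to0$ almost surely while $\|W_n-W\|_1=\tfrac12$ for every $n$.

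Your bound on (II) is nevertheless true by a different argument. Apply H\"older in $x$ only: $|(\mathrm{II})|\le\|e_n\|_p^{p-1}\|\calL^{V}u\|_p$ with $V=W_n-W$. Then interpolate: $\|\calL^Vu\|_p\le\|\calL^Vu\|_1^{1/p}\|\calL^Vu\|_\infty^{1-1/p}$.
\begin{itemize}
\item Testing the signed kernel against bounded functions in \eqref{eq: cut norm 1} gives $\|\calL^Vu\|_1\le2\|u\|_\infty\|V\|_\square$: one cut norm for $\int_0^1V(x,y)u(y)\,dy$, and one for $u(x)\int_0^1V(x,y)\,dy$.
\item The degree bounds give $\|\calL^Vu\|_\infty\le4\|u\|_\infty$.
\end{itemize}
Hence $\|\calL^Vu\|_p\le4\|u\|_\infty\|V\|_\square^{1/p}$, which is exactly the estimate you need.
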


\begin{proof}
Let $e_n(x,t) = u_n(x,t) - u(x,t)$. 
\begin{align*}
    \frac{\partial e_n}{\partial t}(x,t) &= \frac{\partial u_n}{\partial t}(x,t) - \frac{\partial u}{\partial t}(x,t)
    \\
    &= \int_0^1 W_n(x,y)\big(u_n(y,t) - u_n(x,t)\big) \, dy - \int_0^1 W(x,y)\big(u(y,t) - u(x,t)\big) \, dy 
    \\
    &\quad \quad \quad \quad + \Phi(u_n(x,t)) - \Phi(u(x,t)) 
    \\
    &= \int_0^1 W_n(x,y)\big(e_n(y,t)-e_n(x,t)\big)\, dy + \int_0^1 (W_n-W)(x,y) \big(u(y,t)-u(x,t)\big)\, dy 
    \\
    &\quad \quad \quad \quad + \Phi(u_n(x,t)) - \Phi(u(x,t)) \,.\numberthis \label{eq: partial deriv ac}
\end{align*}
Then, $e_n$ satisfies the equation
\[ \frac{\partial e_n}{\partial t} (x,t) = \cL^{W_n} e_n + \cL^{W_n-W} u + \Phi(u_n) - \Phi(u).\]

Using the identity \eqref{eq: time deriv of lp norm} and the notation $\varphi(a) = |a|^{p-2}a$, we obtain
\begin{align*}
    \frac{1}{p} \frac{\partial}{\partial t} \|e_n(\cdot, t)\|_p^p 
    &= \frac{1}{p} p \left \langle \varphi(e_n(\cdot,t)), \frac{\partial}{\partial t} e_n(\cdot, t) \right \rangle 
    \\
    &= \int_0^1 \varphi(e_n(x,t)) \cL^{W_n} e_n \, dx  + \int_0^1 \varphi(e_n(x,t)) \cL^{W_n-W} u \, dx 
    \\
    & \qquad + \int_0^1 \varphi(e_n(x,t)) \left(\Phi(u_n) - \Phi(u)\right) dx
    \\
    &=: (\mathrm{I}) + (\mathrm{II}) + (\mathrm{IV})\,,
\end{align*}
where the quantity (IV) is so named to avoid confusion with the term (III) from the proof of \Cref{thm: diffusion convergence} (in equation \eqref{eq: term iii lastline}). The terms (I) and (II) are the same as those from \Cref{thm: diffusion convergence}. 
Using the identity \eqref{eq: chain rule derivative of Lp}, we get  
\begin{equation} \label{eq: rd error derivative three terms}
    \frac{\partial}{\partial t} \|e_n(\cdot, t)\|_p \leq  \|e_n(\cdot, t)\|^{1-p}_p \left((\mathrm{I}) + (\mathrm{II}) + (\mathrm{IV}) \right) \,.  
\end{equation}

We now bound the right-hand side of \eqref{eq: rd error derivative three terms}. The calculations in the proof of \Cref{thm: diffusion convergence} (see equations \eqref{eq: term ii lastline} and \eqref{eq: term iii lastline})
give 
\begin{equation}\label{eq: rd convergence term i and ii}
    \|e_n\|^{1-p}_p \left((\mathrm{I}) + (\mathrm{II}) \right) \leq 4 \| u(\cdot,0) \|_\infty \|W_n-W\|_\square^{1/p} = 4 M \|W_n-W\|_\square^{1/p} \,. 
\end{equation}
To bound $(\mathrm{IV})$, let $q = p/(p-1)$. H{\"o}lder's inequality and the fact that $\Phi$ is $K$-Lipschitz give 
\begin{align*}
    \|e_n\|^{1-p}_p (\mathrm{IV}) &= \|e_n\|^{1-p}_p \int_0^1 \varphi(e_n(x,t)) \big( \Phi(u_n(x,t)) - \Phi(u(x,t)) \big) \, dx
    \\
    &\leq \|e_n\|^{1-p}_p \, \|e_n^{p-1}\|_q \, \|\Phi(u_n(x,t)) - \Phi(u(x,t))\|_p 
    \\
    &= \|e_n\|^{1-p}_p \, \|e_n\|_p^{p-1} \, \|\Phi(u_n(x,t)) - \Phi(u(x,t))\|_p 
    \\
    &\leq K \|e_n(\cdot, t)\|_p \,. \numberthis \label{eq: rd convergence term iii}
\end{align*}

Combining the bounds \eqref{eq: rd convergence term i and ii} and \eqref{eq: rd convergence term iii}, we have that 
\begin{equation} \label{eq: RD to apply gronwall to}
    \frac{\partial}{\partial t} \|e_n(\cdot, t)\|_p \leq 4 M \|W_n-W\|_\square^{1/p} + K \|e_n(\cdot, t)\|_p \,. 
\end{equation}
Gronwall's inequality applied to \eqref{eq: RD to apply gronwall to} gives 
\begin{equation*} 
    \|e_n(\cdot, t)\|_p \leq  \|e_n(\cdot, 0)\|_p \, e^{Kt} + 4 M \|W_n-W\|_\square^{1/p} \frac{\left( e^{Kt} -1\right) }{K}\,. 
\end{equation*}
\end{proof}

\begin{remark}
\label{rem:factor}
    Theorem \ref{thm: graph rd convergence to graphon rd} reduces to Theorem \ref{thm: diffusion convergence} in the case $K=0$ because $\frac{\left( e^{K t} -1\right)}{K} \ra t$ as $K \ra 0$. Note that if $\Phi$ is a monotone function, in which case the solution of the graph and graphon RD equations are each gradient flows of a convex energy, then the solutions have a long-time asymptotic stability. In such a case, one may eliminate the exponential factor $e^{2Kt}$ or even prove the conclusion with a negative exponent $K$.  
\end{remark}

\begin{remark} Just like in the diffusion equation case (see Remark \ref{remark: medvedev for any convex function}), one can generalize Theorem \ref{thm: graph rd convergence to graphon rd} to show that any convex function of $u_n-u$ converges to zero as $n\ra \infty$. 
\end{remark}

Now we state and prove the result in the case that $W_n$ and $W$ are $L^\infty$ graphons.

\begin{theorem}\label{thm: graph rd convergence to graphon rd Linfty}
   Let $T>0$.
    Let $\Phi$ satisfy \Cref{assump: phi}i).
    Let $W_n$ and $W$ satisfy \Cref{assump:W2}i). For each $W_n$, let $u_n \in C^1([0,T];L^\infty((0,1)))$ be the unique classical solution to the graph RD equation \eqref{eq: graph RD} and let $u \in C^1([0,T];L^\infty((0,1)))$ be the unique classical solution to the graphon RD equation \eqref{eq: graphon RD}; let their initial conditions satisfy \Cref{assump: u0}iv). If $u_n(\cdot,0)$ and $u(\cdot,0)$ both satisfy \eqref{eq:initial bound}, then
\begin{equation*} 
    \|u_n(\cdot, t) - u(\cdot, t)\|_\infty \leq  e^{Kt} \|u_n(\cdot, 0)-u(\cdot, 0)\|_\infty  + \frac{2M \left( e^{K t} - 1\right) }{K} \|W_n-W\|_\infty\,,
\end{equation*}
where $M = \| u(\cdot,0) \|_\infty$ and $K$ is the Lipschitz constant of $\Phi$ on the interval $[M_1,M_2]$.
\end{theorem}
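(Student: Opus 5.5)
The plan is to combine the energy argument of \Cref{thm: graph rd convergence to graphon rd} with the $L^\infty$ strategy of \Cref{thm: L infty diffusion convergence}. I would work with finite $p$ and pass to $p \to \infty$ only at the end. Since $\Phi$ is only locally Lipschitz, the first step is to confine both solutions to $[M_1,M_2]$.

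\emph{Step 1: confinement.} Both $u_n(\cdot,0)$ and $u(\cdot,0)$ satisfy \eqref{eq:initial bound}. Hence \Cref{thm:MaxPrinciple}, which applies to \eqref{eq: graph RD} as a special case of \eqref{eq: graphon RD}, gives global classical solutions with $M_1 \le u_n(x,t), u(x,t) \le M_2$ for a.e.\ $x$ and all $t\in[0,T]$. On this interval $\Phi$ is $K$-Lipschitz, so $|\Phi(u_n)-\Phi(u)| \le K|e_n|$ pointwise a.e., where $e_n = u_n - u$.

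We also need a uniform bound on $\|u(\cdot,t)\|_\infty$ to control the term $\cL^{W_n-W}u$. The confinement gives $\|u(\cdot,t)\|_\infty \le \max(|M_1|,|M_2|)$. To obtain exactly $M=\|u(\cdot,0)\|_\infty$, I would apply \Cref{thm:MaxPrinciple} again with $M_1,M_2$ replaced by $-M, M$, provided $\Phi(M)\le 0\le \Phi(-M)$. This is the point I expect to be the main obstacle, since the stated constant $M$ does not follow automatically from \Cref{assump: phi}ii). If this sign condition is not available, I would either treat $M$ as $\sup_{t\le T}\|u(\cdot,t)\|_\infty \le \max(|M_1|,|M_2|)$ or note that the constant changes harmlessly.

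\emph{Step 2: differential inequality for $\|e_n\|_p$.} As in the proof of \Cref{thm: graph rd convergence to graphon rd}, I would write $\partial_t e_n = \cL^{W_n}e_n + \cL^{W_n-W}u + \Phi(u_n)-\Phi(u)$ and split $\frac1p\partial_t\|e_n\|_p^p = (\mathrm{I})+(\mathrm{II})+(\mathrm{IV})$. The three terms are handled as follows.
\begin{enumerate}[i)]
\item $(\mathrm{I})\le 0$ exactly as in \eqref{eq: bound on (I)}.
\item $(\mathrm{II})$ is bounded as in \eqref{eq: term ii lastline infty} by $2\sup_s\|u(\cdot,s)\|_\infty\|W_n-W\|_p\|e_n\|_p^{p-1}$.
\item $(\mathrm{IV})\le K\|e_n\|_p^{p}$ by H\"older's inequality together with the pointwise Lipschitz bound from Step 1, as in \eqref{eq: rd convergence term iii}.
\end{enumerate}
Via \eqref{eq: chain rule derivative of Lp}, this gives
\[
\partial_t\|e_n\|_p \le 2M\|W_n-W\|_p + K\|e_n\|_p .
\]

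\emph{Step 3: Gronwall and $p\to\infty$.} Gronwall's inequality yields $\|e_n(t)\|_p\le e^{Kt}\|e_n(0)\|_p + \frac{2M(e^{Kt}-1)}{K}\|W_n-W\|_p$ for every finite $p$. All quantities lie in $L^\infty$ on the unit interval or square, so $\|\cdot\|_p\to\|\cdot\|_\infty$ as $p\to\infty$, and the constants $K$ and $M$ do not depend on $p$. Letting $p\to\infty$ gives the claimed bound. A minor technical point is the case where $\|e_n\|_p=0$ when applying \eqref{eq: chain rule derivative of Lp}; I would handle it as in the earlier proofs, for instance by differentiating $(\|e_n\|_p^p+\varepsilon)^{1/p}$ and letting $\varepsilon\to0$.
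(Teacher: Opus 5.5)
Your argument is the paper's argument. You confine $u_n,u$ to $[M_1,M_2]$ with \Cref{thm:MaxPrinciple} so that $|\Phi(u_n)-\Phi(u)|\le K|e_n|$. You then use $(\mathrm{I})\le 0$ and the H\"older bound on $(\mathrm{II})$ from \eqref{eq: term ii lastline infty}, apply Gronwall for finite $p$, and let $p\to\infty$.

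The only divergence is the constant in front of $\|W_n-W\|_\infty$, and there your caution is justified. The paper gets $2\|u(\cdot,0)\|_\infty$ by reusing the last line of \eqref{eq: term ii lastline infty}. That line replaces $\|u(\cdot,t)\|_\infty$ by $\|u(\cdot,0)\|_\infty$ via \Cref{lem: stability of semigroup}. That lemma was valid where the bound was derived, in the proof of \Cref{thm: L infty diffusion convergence}, because there $u$ solves the pure diffusion equation. It says nothing about the reaction--diffusion solution, whose sup norm can grow. (The same substitution is made in \eqref{eq: rd convergence term i and ii}.)

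In fact, the inequality with $M=\|u(\cdot,0)\|_\infty$ is false in general. Take the following data:
\begin{itemize}
\item $\Phi(v)=v$ for $|v|\le\frac12$, $\Phi(v)=1-v$ for $v\ge\frac12$, and $\Phi(v)=-1-v$ for $v\le-\frac12$. The slope is $\pm1$ everywhere, so $K=1$, and $M_1=-1$, $M_2=1$ are admissible.
\item $W\equiv 0$, and $W_n$ the complete graph on $n$ nodes ($n$ even) with all edge weights $\eta$. Then $\|W_n-W\|_\infty=\eta$.
\item $u_n(\cdot,0)=u(\cdot,0)=\delta s$, with $s=\mathbf{1}_{(0,1/2]}-\mathbf{1}_{(1/2,1)}$ and $\delta=0.02$.
\end{itemize}
Since $s$ is a mean-zero step function and $\mathcal{L}^{W_n}v=\eta\big(\int_0^1 v\,dy-v\big)$ for step functions $v$, both solutions stay in the linear range $|v|\le\frac12$ on $[0,3]$. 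There $u(\cdot,t)=\delta e^{t}s$ and $u_n(\cdot,t)=\delta e^{(1-\eta)t}s$. At $t=3$ we get $\|u_n-u\|_\infty=\delta e^{3}(1-e^{-3\eta})\approx 1.21\,\eta$. The claimed right-hand side is $2\delta(e^{3}-1)\eta\approx 0.76\,\eta$, so the stated bound fails for all small $\eta$.

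So the change of constant you describe is necessary, not optional. As you say, it is harmless for the convergence conclusion. The correct estimate replaces $2M$ by $2\sup_{s\le T}\|u(\cdot,s)\|_\infty\le 2\max(|M_1|,|M_2|)$. Alternatively, using $|u(y,t)-u(x,t)|\le M_2-M_1$ directly in $(\mathrm{II})$, you can replace $2M$ by $M_2-M_1$. The stated constant is recovered only under an extra hypothesis such as your $\Phi(M)\le 0\le\Phi(-M)$, which lets \Cref{thm:MaxPrinciple} be applied on $[-M,M]$. With that correction, your proof is complete.
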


\begin{proof} 
    Let $e_n(x,t) = u_n(x,t) - u(x,t)$. We obtain equation \eqref{eq: rd error derivative three terms} in the same way as it was obtained in the proof of \Cref{thm: graph rd convergence to graphon rd}. Namely, we obtain
    \begin{align*}
    \frac{\partial}{\partial t} \|e_n(\cdot, t)\|_p 
    &= \|e_n\|^{1-p}_p  \left( (\mathrm{I}) + (\mathrm{II}) + (\mathrm{IV}) \right).
\end{align*}
Using the bounds \eqref{eq: bound on (I)} and \eqref{eq: term ii lastline infty}, we have that
\begin{equation*}
    \|e_n(\cdot, t)\|^{1-p}_p \left( (\mathrm{I}) + (\mathrm{II}) \right) \leq  2\|u(\cdot,0)\|_\infty \|W_n-W\|_p \,. 
\end{equation*}
It remains to bound $\|e_n\|^{1-p}_p (\mathrm{IV})$. By \Cref{thm:MaxPrinciple}, we know that $u_n(x,t)$, $u(x,t) \in [M_1,M_2]$ for almost every $x$ and all $t \in [0,T]$. Thanks to \Cref{assump: phi}i), we obtain 
    \begin{equation*}
       \big| \Phi(u_n(x,t)) - \Phi(u(x,t)) \big| \leq K |e_n(x,t)|, \qquad \text{a.e. in } (0,1), t \in [0,T].
    \end{equation*}
    As a result, we can obtain the same bound \eqref{eq: rd convergence term iii} for $\|e_n\|^{1-p}_p (\mathrm{IV})$, even though $\Phi$ is now assumed to be only locally Lipschitz. Finally, we obtain 
    \begin{equation*}
        \frac{\partial}{\partial t} \| e_n(\cdot,t) \|_p \leq 2 \|u(\cdot,0)\|_\infty \| W_n - W \|_p + K \| e_n(\cdot,t) \|_p \,.
    \end{equation*}
    The desired result then follows from Gronwall's inequality and taking $p \to \infty$.
\end{proof}

\section{Interacting particle process} \label{sec: IPS}

In this section, we describe an interacting particle process  on graphs that consists of a continuous-time random walk superimposed with a birth-death process. For a sequence of graphons $W_n$ converging in cut norm to a graphon $W$, we show that the sequence of particle processes on $W_n$ converges in probability to the solution of a nonlocal reaction--diffusion (RD) equation, where the nonlocal diffusion is weighted by the graphon $W$.

Consider a graph $W_n$ with $n$ nodes. Suppose that at time $t=0$, there is some number of particles on each node. Let $m_k(t)$ be the number of particles on node $k$ at time $t$, and let $\vec{m}(t) = (m(t), m_1(t), \dots, m_n(t))$. Let $\ell > 0$ be a parameter proportional to the average number of particles per node at $t=0$, so that $m_k/\ell$ is a density of particles. In the large-graph limit, $\ell$ will act as a scaling parameter by growing to $\infty$ as $n$ grows to $\infty$.

Let $e_i \in \R^n$ be the $i$th unit vector. As is described in \cite{watanabe2022continuum}, we let $\vec{m}$ undergo the stochastic transitions
\begin{equation}\label{eq: def m jumps}
\left\{\begin{array}{l}
    \vec{m} \rightarrow \vec{m}+e_i-e_k  \quad \text { at rate } m_k W_n\left(k/n, i/n\right)n^{-1}, \text{ for all } i,k \in [n] \,, \\
    \vec{m} \rightarrow \vec{m}+e_k \quad \quad \quad \text { at rate } \ell b\left(m_k/\ell\right), \\
    \vec{m} \rightarrow \vec{m}-e_k \quad \quad \quad \text { at rate } \ell d \left(m_k/\ell\right),
    \end{array}\right.
\end{equation} 
where $b, d: \R \ra \R$ are uniformly Lipschitz-continuous birth and death rates, respectively.
Without loss of generality, we assume that $b(0) = d(0) = 0$. 
Moreover, we use the notation that $\Phi(x):= b(x) - d(x)$.  The rate $m_k W_n(k/n,i/n)$ of jumps from site $k$ to site $i$ is proportional to the number of particles at site $k$ and to the density of edges between sites $k$ and $i$. 
We also assume that $\vec{m}(t)$ is a c{\`a}dl{\`a}g process on some probability space.
We now define the stochastic process $X^n: (0,1) \times [0,\infty) \ra \R$ as the scaled process 
\begin{equation}\label{eq: def X^n}
    X^n(x,t) = \frac{m_k(t)}{\ell} \; \text { for } \; x \in I_k \,,
\end{equation}
for each node $k\in [n]$. The quantity $X^n(x,t)$ is the density of particles at node $k$. Note that we suppress the dependence of $X^n$ on $\ell$. We also sometimes suppress the dependence of $X^n$ on $x$, and write it as $X^n(t)$. Because $X^n(t)$ is piecewise-constant on $(0,1)$, we are able to apply the graph Laplacian semigroup operator to $X^n(t)$.

Define $\mathcal{F}_{t}^{n,\ell}$ to be the $\sigma$-algebra generated by $\vec{m}(t)$. 
Suppose that $\tau$ is a stopping time that satisfies
\begin{equation}\label{eq:GenericStopTime}
    \sup_{t\in [0,T]} \sup_{k \in \{1,\ldots,n \} } \pmb{1}_{\{ \tau > 0 \} } m_k( t \wedge \tau) = C(T,n,\ell) < \infty, \qquad \forall \, T, n,\ell > 0,
\end{equation}
where we denote $t \wedge \tau = \min \{ t, \tau \}$. The condition \eqref{eq:GenericStopTime} guarantees that the particle process is finite at the stopping time. 

\Cref{lem: martingale}, which we state below, identifies the infinitesimal generator of the process $\vec{m}$. The generator resembles the graphon RD operator $\calL^{W_n} + \Phi$. Using Dynkin's formula, we then provide a mean-zero martingale associated to $\vec{m}$.

\begin{lemma}\label{lem: martingale}
    Let $A : \R^n \to \R^n$ be the function defined by
    \begin{equation}\label{eq: generator A}
        A(\vec{x}) = \sum_{k=1}^{n} \left[ \frac{x_k}{n} \sum_{i = 1}^n W(\frac{k}{n},\frac{i}{n}) (e_i - e_k) + \ell e_k \big( b(\frac{x_k}{\ell}) - d(\frac{x_k}{\ell}) \big) \right]\,, 
    \end{equation}
    where $\vec{x} = (x_1,\ldots,x_n) \in \R^n$. Then, the process 
    \begin{equation}\label{eq: M}
        \vec{M}(t) = \vec{m}(t \wedge \tau) - \vec{m}(0) - \int_0^{t \wedge \tau} A(\vec{m}(s)) ds,
    \end{equation}
    is a mean-zero martingale. 
\end{lemma}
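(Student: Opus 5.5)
The plan is to identify the infinitesimal generator of the continuous-time Markov chain $\vec{m}$ from the jump rates \eqref{eq: def m jumps} and then apply Dynkin's formula, in its stopped form, to the coordinate functions $f_j(\vec{x}) = x_j$, $j \in [n]$. The stopping time $\tau$ satisfying \eqref{eq:GenericStopTime} supplies the integrability that the unstopped chain lacks.

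First, I will write down the generator. For a bounded function $f$ on $\mathbb{Z}_{\ge 0}^n$,
\begin{align*}
\mathcal{G} f(\vec{x}) &= \sum_{k=1}^n \sum_{i=1}^n \frac{x_k}{n} W_n\Big(\frac{k}{n},\frac{i}{n}\Big) \big[f(\vec{x}+e_i-e_k) - f(\vec{x})\big] \\
&\quad + \sum_{k=1}^n \ell\, b\Big(\frac{x_k}{\ell}\Big)\big[f(\vec{x}+e_k)-f(\vec{x})\big] + \sum_{k=1}^n \ell\, d\Big(\frac{x_k}{\ell}\Big)\big[f(\vec{x}-e_k)-f(\vec{x})\big].
\end{align*}
Here $W$ in \eqref{eq: generator A} is read as the graph $W_n$. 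Taking $f = f_j$ makes each jump increment linear: $e_i - e_k$, $e_k$, and $-e_k$ respectively. Collecting the $j$-th components of these increments gives exactly $(A(\vec{x}))_j$. So, formally, $\mathcal{G}$ applied coordinatewise to the identity map is $A$.

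Second, I will justify Dynkin's formula for these unbounded $f_j$. On $\{\tau > 0\}$, \eqref{eq:GenericStopTime} gives $m_k(s\wedge\tau) \le C(T,n,\ell)$ for all $s \le T$. Because $b$, $d$, and $W_n$ are bounded on bounded sets (Lipschitz with $b(0)=d(0)=0$, and $W_n$ is a step function with finitely many values), the total jump rate of the stopped process is uniformly bounded by some $\Lambda(T,n,\ell)<\infty$. I will then replace $f_j$ by a bounded function $\tilde f_j$ that agrees with $x_j$ on the box $\{\|\vec{x}\|_\infty \le C+1\}$; the stopped chain never leaves this box before time $T$. For bounded functions on a chain with bounded rates, the standard result says $\tilde f_j(\vec{m}(t\wedge\tau)) - \tilde f_j(\vec{m}(0)) - \int_0^{t\wedge\tau}\mathcal{G}\tilde f_j(\vec{m}(s))\,ds$ is a martingale, by optional stopping applied to the Dynkin martingale. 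Since $\mathcal{G}\tilde f_j = \mathcal{G} f_j = A_j$ on the box, this expression equals $M_j(t)$. Hence $\vec{M}$ is a martingale. The event $\{\tau = 0\}$ is harmless, because there $\vec{M} \equiv 0$.

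Finally, mean zero follows from $\vec{M}(0) = 0$ and the martingale property: $\mathbb{E}\vec{M}(t) = \mathbb{E}\vec{M}(0) = 0$. Integrability of $\vec{M}(t)$ comes from $|M_j(t)| \le 2C + T\sup_{\text{box}}|A_j| < \infty$.

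The main obstacle is the second step, making Dynkin's formula rigorous for unbounded test functions and for a chain that may be explosive or unbounded before localization. The bound \eqref{eq:GenericStopTime} is exactly what resolves it, through the bounded-rate and bounded-function reduction. A further subtlety is that a jump at time $\tau$ itself could take the chain just outside the box. I will handle this by enlarging the box by one, since jumps have size at most $2$ in $\ell^\infty$.
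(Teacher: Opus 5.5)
Your proposal is correct and follows essentially the same route as the paper: identify $A$ as the generator acting on the coordinate maps, apply Dynkin's formula with the stopping-time bound \eqref{eq:GenericStopTime} supplying integrability, and get mean zero from $\vec M(0)=0$ (the paper instead applies Dynkin with $f=\mathrm{Id}$). Your localization through a bounded $\tilde f_j$ equal to $x_j$ on the box of radius $C+1$ is a more careful version of the paper's step, which simply asserts that $A$ is bounded and applies Dynkin's formula to the unbounded identity map. Two small points: enlarging the box by one suffices because every jump has $\ell^\infty$-size at most $1$ (not $2$), and you should take $\tilde f_j$ finitely supported so that $\mathcal{G}\tilde f_j$ stays bounded, since the unstopped chain has linearly growing rates.
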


\begin{proof}
    $A$ is the infinitesimal generator of the process $\vec{m}$. 
    See the Appendix \ref{sec: appendix} for a proof of this fact. 
    
    To prove the lemma, we must show that 
    \begin{itemize}
        \item[i)] $\mathbb{E}[|\vec{M}(t)|] < \infty$ for all $t \in[0,T]$, and
        \item[ii)] $\mathbb{E}[ (\vec{M}(t) - \vec{M}(s)) 1_F ] = 0$ for all $s < t$ and for all sets $F \in \mathcal{F}_s^{n,\ell}$. 
    \end{itemize}
    Statement i) holds because $\vec{m}(t\wedge \tau)$ is bounded due to the stopping time property \eqref{eq:GenericStopTime} and because $A: \R^n \ra \R^n$ is a bounded operator. The operator $A$ is bounded because the graph diffusion operator is bounded (see equation \eqref{eq:DiffusionOperatorBd2}) and because the birth-death function $b-d$ is Lipschitz on its domain $[0,1]$.

    We now show statement ii). Denote $\ex^{\vec{x}}[\, \cdot\, ] = \ex[\, \cdot \, | \vec{m}(0) = \vec{x}]$. Due to the definition \eqref{eq: M}, we have 
    \begin{align*} 
        \ex[\vec{M}(t\wedge \tau)] - \ex[\vec{M}(s\wedge \tau) ]
        &= \ex^{\vec{x}}[\vec{m}(t\wedge \tau)] - \ex^{\vec{x}}[\vec{m}(s\wedge \tau)] - \ex^{\vec{x}}\left[\int_{s\wedge \tau}^{t\wedge \tau} A\vec{m}(z) dz \right]\,. 
    \end{align*}
    By Dynkin's formula, the right-hand-side is equal to zero. 
    Next, we show that $\vec{M}(t)$ is a mean-zero martingale. By Dynkin's formula and the fact that $A$ is the generator of $\vec{m}$, we have 
    \begin{equation*} 
        \ex^{\vec{x}}[f(\vec{m}(t\wedge \tau)] = f(\vec{x}) + \ex^{\vec{x}}\left[\int_0^{t \wedge \tau} Af(\vec{m}(s))ds \right] \,. 
    \end{equation*}
    Taking $f = Id$ and $\vec{x} = \vec{m}(0)$, we have $\ex[\vec{M}(t)] = 0$. 
\end{proof}

This result implies that the quantity $Z^n(t \wedge \tau)$, where
\begin{equation}\label{eq: def Z^n}
    Z^n(t) = X^n(t) - X^n(0) - \int_0^t \calL^{W_n} (X^n(s)) \, ds - \int_0^t \Phi(X^n(s))\, ds, \qquad t \geq 0,
\end{equation}
is an $\mathcal{F}_t^{n,\ell}$ mean-zero martingale. (See Appendix \ref{appendix: Z^n is a martingale} for a proof of this fact.)

Using \eqref{eq: def Z^n}, we can write $X^n$ as 
\begin{equation}\label{eq: alt def X^n}
    X^n(t) = X^n(0) + \int_0^t \calL^{W_n} (X^n(s)) \, ds + \int_0^t \Phi(X^n(s))ds + Z^n(t) \,. 
\end{equation}
We apply the method of variation of parameters \cite[Chapter 3]{engel2000one}
to \eqref{eq: alt def X^n}---that is, we apply the operator $e^{-\calL^{W_n} t} = [e^{t\calL^{W_n} }]^{-1}$ to both sides, integrate both sides, and rearrange---to get 
\begin{equation}\label{eq: alt alt def X^n}
    X^n(t) = e^{t \calL^{W_n}} X^n(0) + \int_0^t e^{ (t-s)\calL^{W_n}}\Phi(X^n(s)) \, ds + Y^n(t)\,,
\end{equation}
where we have defined 
\begin{equation}\label{eq: def Y^n}
    Y^n(t) = \int_0^t e^{ (t-s) \calL^{W_n}} \, dZ^n(s).
\end{equation}
The stochastic integral $Y^n(t)$ is a Stieltjes integral, and it is well-defined because $Z^n(s,kn^{-1})$ is of bounded variation in $s$ and because $t \mapsto e^{t\calL^{W_n}}$ can be viewed as a continuous $n\times n $ matrix-valued function.

Observe that \eqref{eq: alt alt def X^n} resembles the mild formulation \eqref{eq:RDSemigroupFormula} of the graphon RD equation, but \eqref{eq: alt alt def X^n} has an additional stochastic term. In the following subsection, we state a law of large numbers theorem. 

\subsection{Law of large numbers}
The main result of this section is Theorem \ref{thm: lln in l2}, which shows that the quantity $\|X^n(t) - u(t)\|_2$ converges to zero in probability (where $u$ is the solution of the graphon RD equation \eqref{eq: graphon RD}).

To prove \Cref{thm: lln in l2}, we show that each of the quantities $\|X^n(t) - u_n(t)\|_2$ and $\|u_n(t) - u(t)\|_2$ converge to zero in probability. The stability and convergence results shown in Sections \ref{sec: graphon diffusion} and \ref{sec: rd equation convergence} allow us to prove that the term $\|u_n(t) - u(t)\|_2$ converges to zero as $W_n \sqra W$. We then rewrite the quantity $\|X^n(t) - u_n(t)\|_2$ using the semigroup formulations \eqref{eq: alt alt def X^n} and \eqref{eq:RDSemigroupFormula} of the stochastic process and of the graphon RD equation, respectively. To show that the stochastic term $Y^n$ converges to zero in probability, we use \Cref{lma:Martingale2}, which is an application of Dynkin's formula to our stochastic process \eqref{eq: def m jumps}.

\begin{theorem}\label{thm: lln in l2}
Let $W_n$ be a sequence of graphs that converge in cut norm to an $L^1$ graphon $W$. I.e., assume that both $W_n$ and $W$ satisfy \Cref{assump:W2}ii). Assume that the reaction term $\Phi = b - d$ satisfies \Cref{assump: phi}iv). Let $u \in C^1([0,T];L^\infty((0,1)))$ be the solution to the graphon RD equation \eqref{eq: graphon RD} with initial condition $u_0 \in L^\infty((0,1))$. Suppose that 
\begin{enumerate}[(I)]
    \item $\|X^n(0) - u_0 \|_2 \ra 0$ in probability as $n\ra\infty$, and 
    \item the dependence of $\ell$ on $n$ is such that $\ell \to \infty$ as $n \to \infty$.
\end{enumerate}
Then, for $T > 0$, 
\begin{equation}
    \sup_{0 \leq t \leq T} \|X^n(t) - u(t)\|_2 \ra 0 \quad \text{ in probability as } n \ra \infty \,. 
\end{equation}
\end{theorem}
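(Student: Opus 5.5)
The argument splits the error through the deterministic graph solution. Let $u_n$ be the solution of the graph RD equation \eqref{eq: graph RD} on $W_n$ with the deterministic initial datum $u_n(0)=u_0$. This is well posed for all $T$ by \Cref{thm:RdWellPosed-p} with $p=2$, since $\Phi=b-d$ is globally $K$-Lipschitz with $\Phi(0)=0$. Then
\[
\sup_{t\le T}\|X^n(t)-u(t)\|_2 \le \sup_{t\le T}\|X^n(t)-u_n(t)\|_2+\sup_{t\le T}\|u_n(t)-u(t)\|_2 .
\]
The second term is deterministic. By \Cref{thm: graph rd convergence to graphon rd} with $p=2$ and equal initial data, it is bounded by $\frac{4\|u_0\|_\infty(e^{KT}-1)}{K}\|W_n-W\|_\square^{1/2}$, which tends to $0$ because $W_n\sqra W$.

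For the stochastic term, subtract the mild formula \eqref{eq:RDSemigroupFormula} for $u_n$ (with $\cL^{W_n}$) from \eqref{eq: alt alt def X^n}:
\[
X^n(t)-u_n(t)=e^{t\cL^{W_n}}(X^n(0)-u_0)+\int_0^t e^{(t-s)\cL^{W_n}}\big(\Phi(X^n(s))-\Phi(u_n(s))\big)ds+Y^n(t).
\]
The semigroup is an $L^2$ contraction by \Cref{thm:WellDefinedSemigroup}. Together with the Lipschitz bound on $\Phi$, this gives
\[
\|X^n(t)-u_n(t)\|_2\le \|X^n(0)-u_0\|_2+K\int_0^t\|X^n(s)-u_n(s)\|_2ds+\sup_{s\le T}\|Y^n(s)\|_2 .
\]
Gronwall's inequality then yields
\[
\sup_{t\le T}\|X^n-u_n\|_2\le e^{KT}\Big(\|X^n(0)-u_0\|_2+\sup_{t\le T}\|Y^n(t)\|_2\Big).
\]
The first summand tends to $0$ in probability by hypothesis (I). It therefore remains to show $\sup_{t\le T}\|Y^n(t)\|_2\to0$ in probability. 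All these estimates are carried out up to a localizing stopping time $\tau=\tau_R$, the first time $\max_k m_k/\ell$ exceeds a large $R$. The Gronwall bound keeps $\|X^n\|$ controlled, so $P(\tau_R\le T)$ can be made small uniformly in $n$. That requires an $L^\infty$-type bound; if only the $L^2$ bound is available, I would instead localize on $\|X^n\|_2\le R$, which suffices for the Lipschitz estimates.

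\textbf{Main obstacle: the martingale term $Y^n$.} The convolution $\int_0^t e^{(t-s)\cL^{W_n}}dZ^n(s)$ is not itself a martingale in $t$, so Doob's inequality does not apply directly. I would first try the ``factorization'' and integration-by-parts route. Since $\cL^{W_n}$ is bounded with norm at most $2$,
\[
Y^n(t)=Z^n(t)+\int_0^t \cL^{W_n}e^{(t-s)\cL^{W_n}}Z^n(s)\,ds,
\]
and hence $\sup_{t\le T}\|Y^n(t)\|_2\le(1+2T)\sup_{t\le T}\|Z^n(t)\|_2$. This reduces the problem to the martingale $Z^n$. Doob's $L^2$ inequality gives
\[
\ex\sup_{t\le T}\|Z^n(t\wedge\tau)\|_2^2\le 4\,\ex\|Z^n(T\wedge\tau)\|_2^2 .
\]
The right-hand side equals the expected integrated quadratic variation, which is the content of \Cref{lma:Martingale2} via Dynkin's formula applied to $f(\vec m)=m_k^2$. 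Each jump changes $X^n$ on a single $I_k$ (or on two of them) by $1/\ell$, and the $L^2$ norm of such a jump squared is $\frac{1}{n\ell^2}$. The total jump rate is $\sum_k\big(m_k d_{W_n}+\ell b+\ell d\big)\lesssim n\ell(1+R)$ before $\tau$, using $d_{W_n}\le1$ and the Lipschitz growth of $b$ and $d$. Hence
\[
\ex\|Z^n(T\wedge\tau)\|_2^2\lesssim T(1+R)/\ell\to0
\]
by (II). Chebyshev's inequality, combined with $P(\tau_R\le T)$ being small, concludes the argument. The delicate points are verifying the quadratic-variation computation and making the stopping-time localization uniform in $n$.
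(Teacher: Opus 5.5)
Your proposal is correct. Its skeleton matches the paper's: you split through $u_n$ with $u_n(0)=u_0$, control the deterministic term by the cut-norm estimate for graph RD equations, take the mild-formula difference and apply Gronwall, and bound the expected quadratic variation by a quantity of order $1/\ell$ using the jump-counting martingale. You handle the two technical steps differently, however.

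For the stochastic convolution, the paper invokes Kotelenez's maximal inequality for $\int_0^t e^{(t-s)\cL^{W_n}}\,dZ^n(s)$. You instead integrate by parts pathwise, $Y^n(t)=Z^n(t)+\int_0^t \cL^{W_n}e^{(t-s)\cL^{W_n}}Z^n(s)\,ds$. Combining $\|\cL^{W_n}\|_{L^2\to L^2}\le 2$ (from $d_{W_n}\le 1$) with the $L^2$-contractivity of the semigroup gives $\sup_{t\le T}\|Y^n(t\wedge\tau)\|_2\le(1+2T)\sup_{t\le T}\|Z^n(t\wedge\tau)\|_2$. After that, Doob's $L^2$ inequality for the genuine martingale $Z^n$ suffices. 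This route is more elementary and self-contained, and it treats the stopped convolution pathwise. Its price is that it needs the generator to be bounded uniformly in $n$. That holds here, but it would fail for unbounded (e.g.\ local) diffusion operators, where a Kotelenez-type inequality for general contraction semigroups is the natural tool.

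For localization, the paper stops at $\tau=\inf\{t:\|X^n(t)-u_n(t)\|_2>\ep\}$. With that choice, both the reduction to the stopped process and the $L^2$ bound on $X^n$ before $\tau$ come for free. With your $\tau_R$ you must additionally show $P(\tau_R\le T)\to 0$, and this works with the $L^2$ version $\tau_R=\inf\{t:\|X^n(t)\|_2>R\}$. On $\{\tau_R\le T\}$, right-continuity and $\|u_n(t)\|_2\le\|u_0\|_2e^{KT}$ give $\|X^n(\tau_R)-u_n(\tau_R)\|_2\ge R-\|u_0\|_2e^{KT}$. Take $R=\|u_0\|_2e^{KT}+1$. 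Then the pathwise Gronwall bound, together with the Doob estimate for $Z^n(\cdot\wedge\tau_R)$ (whose quadratic variation is $\lesssim T(1+R)/\ell$), yields $P(\tau_R\le T)\to 0$ with no circularity.

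You should simply drop the $L^\infty$ stopping time on $\max_k m_k/\ell$. Hypothesis (I) gives no $L^\infty$ control of $X^n(0)$, so that version cannot be made to work.
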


Let us introduce the notation 
\begin{equation}
    \Lambda X^n(t) = \|X^n(t) - X^n(t^-) \|_\infty \,,
\end{equation}
which gives the gives the jump size of $X^n$ at time $t$. We also use the notation $\Lambda$ to describe the jump sizes of other stochastic jump processes. Note that the process $X^n$ always has a jump size of $\frac{1}{\ell}$ and the process $m_k$ always has a jump size of $1$.

The following lemma is similar to \cite[Lemma 2.2]{blount1987comparison} and \cite[Lemma 4.1]{watanabe2022continuum}). Its statement is similar to that of \Cref{lem: martingale}, but it replaces quantities of the form $m(t\wedge \tau) - m(0)$ with a discrete sum over the jumps of $m$ between time $0$ and $t\wedge \tau$. 
\begin{lemma}\label{lma:Martingale2}
    Let $\vec{m}$ undergo the transitions described in \eqref{eq: def m jumps}. For each $k \in [n]$, the quantity 
    \begin{align*}
        &\sum_{0\leq s \leq t \wedge \tau} (\Lambda m_k(s))^2 
        - \int_0^{t \wedge \tau} \frac{1}{n} \sum_{\substack{j = 1 \\ j \neq k}}^{n} W_n(k/n,j/n) (m_j(s) + m_k(s) ) ds\\
        &\qquad  -
        \int_0^{t \wedge \tau} \ell b(m_k(s)/\ell) + \ell d(m_k(s)/\ell) ds
    \end{align*} 
    is a mean-zero martingale. 
\end{lemma}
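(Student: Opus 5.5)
The plan is to apply Dynkin's formula, in the form already used in the proof of \Cref{lem: martingale}, to the test function $f_k(\vec{x}) = x_k^2$ instead of the identity, and then subtract the martingale from \Cref{lem: martingale} (with $2m_k(s^-)$ as integrand) to isolate the sum of squared jumps. Concretely, $m_k$ is a pure-jump process with jumps of size $\pm 1$. So $m_k(t\wedge\tau)^2 - m_k(0)^2 = \sum_{s\le t\wedge\tau}\big(2 m_k(s^-)\Lambda m_k(s)^{\pm} + (\Lambda m_k(s))^2\big)$, where $\Lambda m_k(s)^{\pm}$ denotes the signed jump. Equivalently,
\begin{equation*}
\sum_{0\le s\le t\wedge\tau} (\Lambda m_k(s))^2 = m_k(t\wedge\tau)^2 - m_k(0)^2 - 2\int_0^{t\wedge\tau} m_k(s^-)\, dm_k(s).
\end{equation*}

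The first step is to compute the generator $\mathcal{A}$ of $\vec m$ (the jump-rate operator of \eqref{eq: def m jumps}, whose action on the identity is $A$ from \eqref{eq: generator A}) on $f_k$. From the transition rates, $f_k$ changes by $(m_k\pm1)^2-m_k^2 = \pm 2m_k + 1$. Summing over transitions gives $\mathcal{A}f_k(\vec m) = 2m_k A(\vec m)_k + R_k(\vec m)$, where $R_k$ is the total rate of transitions that change $m_k$. These are:
\begin{itemize}
\item jumps out of $k$, at rate $\frac{m_k}{n}\sum_{j\neq k}W_n(k/n,j/n)$;
\item jumps into $k$, at rate $\frac1n\sum_{j\ne k} m_j W_n(j/n,k/n)$;
\item births and deaths at $k$, at rates $\ell b(m_k/\ell)$ and $\ell d(m_k/\ell)$.
\end{itemize}
Self-jumps $i=k$ leave $\vec m$ unchanged and are excluded. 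By symmetry of $W_n$ the two jump terms combine to $\frac1n\sum_{j\ne k}W_n(k/n,j/n)(m_j+m_k)$, which is exactly the compensator in the statement.

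The second step is to invoke Dynkin's formula for $f_k$, stopped at $\tau$. Since \eqref{eq:GenericStopTime} bounds all $m_j(s\wedge\tau)$ by $C(T,n,\ell)$, the rates are bounded, integrability holds, and
\begin{equation*}
m_k(t\wedge\tau)^2 - m_k(0)^2 - \int_0^{t\wedge\tau}\big(2m_k A(\vec m)_k + R_k(\vec m)\big)\,ds
\end{equation*}
is a mean-zero martingale, by the same argument as in \Cref{lem: martingale}.

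The third step is to show that $2\int_0^{t\wedge\tau} m_k(s^-)\,dm_k(s) - 2\int_0^{t\wedge\tau} m_k(s)A(\vec m(s))_k\,ds = 2\int_0^{t\wedge\tau} m_k(s^-)\,dM_k(s)$ is a mean-zero martingale. It is a stochastic integral of a bounded predictable integrand against the martingale $M_k$ from \Cref{lem: martingale}. Here we use that $m_k(s^-)=m_k(s)$ for a.e.\ $s$, so replacing $m_k(s)$ by $m_k(s^-)$ in the $ds$ integral costs nothing. Subtracting this from the martingale of the second step, and using the jump identity above, leaves exactly the quantity in the lemma, which is therefore a mean-zero martingale.

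The main obstacle is the rigor of this third step: justifying that the integral against $M_k$ is a true martingale and not just a local one. I would handle it using boundedness on $[0,T]$ from \eqref{eq:GenericStopTime}, together with the fact that the number of jumps up to $t\wedge\tau$ has finite expectation because the total jump rate is bounded. Alternatively, one can avoid stochastic integration entirely. Write $(\Lambda m_k)^2$ as a sum of counting processes $N^{\mathrm{out}}, N^{\mathrm{in}}, N^{b}, N^{d}$, one for each transition type. Each is compensated by the integral of its rate, which follows again from Dynkin's formula applied to the augmented Markov chain that also records the counts. This second route is cleaner and is the one I would try first.
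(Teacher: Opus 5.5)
Your argument is correct, and your main route differs from the paper's.

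\textbf{The paper's route.} The paper's proof is essentially your fallback. It sets $N(t)=\sum_{0<s\le t\wedge\tau}(\Lambda m_k(s))^2$ and observes that $N$ only ever increases by $1$, at the total rate $R_k(\vec m)$ of the transitions that move $m_k$. It calls $R_k(\vec m(t))$ the ``generator'' $\tilde A(N(t))$ and applies Dynkin's formula once. This tacitly treats $N$ as Markov on its own, although its intensity depends on $\vec m$. The clean way to say it is exactly your device: apply Dynkin's formula to $f(\vec m,N)=N$ for the augmented chain $(\vec m,N)$. So your second route is the paper's proof with that point made explicit.

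\textbf{Your primary route.} You compute the generator on $x_k^2$, obtain $2m_kA(\vec m)_k+R_k(\vec m)$, and strip off the drift part by subtracting $2\int_0^{t\wedge\tau}m_k(s^-)\,dM_k(s)$. The following steps are all sound:
\begin{itemize}
\item the jump identity $\sum(\Lambda m_k)^2=m_k^2(t\wedge\tau)-m_k^2(0)-2\int m_k(s^-)\,dm_k(s)$;
\item the generator computation;
\item the use of symmetry of $W_n$ to merge in- and out-jumps into $\frac1n\sum_{j\neq k}W_n(k/n,j/n)(m_j+m_k)$;
\item the true-martingale justification: the integrand is bounded and predictable, and $M_k$ has integrable variation because \eqref{eq:GenericStopTime} bounds all rates up to $\tau$.
\end{itemize}

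\textbf{What each buys.} Your route identifies the compensator in the lemma as the predictable quadratic variation $\langle M_k\rangle$ of the martingale from \Cref{lem: martingale}. That is precisely the quantity controlled in the Kotelenez bound \eqref{eq: kotelenez bound}, so it explains why this lemma is the right input there. The cost is stochastic-integration machinery and an extra martingale verification. The counting-process route needs a single application of Dynkin's formula and no stochastic integrals.
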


\begin{proof}
    We begin by computing the infinitesimal generator for the process 
    \begin{equation*}
        N(t) := \sum_{0 < s \leq t \wedge \tau} (\Lambda m_k(s))^2 \,. 
    \end{equation*}
    Because of the definition \eqref{eq: def m jumps} of the process $\vec{m}$, we have that the $k$th node can undergo the following transitions 
    \begin{align*}
        &m_k \ra m_k+1 \text{ with rate } \ell b (m_k/\ell) \\
        &m_k \ra m_k-1 \text{ with rate } \ell d (m_k/\ell) \\
        &m_k \ra m_k+1 \text{ with rate } \sum_{j\neq k} \frac{m_j}{n} W_n(k/n,j/n) \\
        &m_k \ra m_k - 1 \text{ with rate } \sum_{j\neq k} \frac{m_k}{n} W_n(k/n,j/n) \,. 
    \end{align*}
    Therefore, when $N(t)$ changes, it is always the change $N(t) \ra N(t) + 1$. The rate at which the change $N(t) \ra N(t) + 1$ occurs is the sum of the rates of all the transitions for $m_k$ given above, and therefore the infinitesimal generator $\tilde{A}$ of the process $N(t)$ is \cite{norris1998markov} 
    \begin{equation*}
        \tilde{A} (N(t)) = \ell b(m_k(t)/\ell) + \ell d (m_k(t))/\ell) + \sum_{j\neq k} \left(\frac{m_j(t)}{n} + \frac{m_k(t)}{n} \right) W_n(k/n,j/n) \,. 
    \end{equation*}
   
    Using Dynkin's formula, we obtain that 
    \begin{equation*} 
        N(t\wedge \tau) - N(0) - \int_0^{t\wedge \tau} \tilde{A} (N(s))ds 
    \end{equation*}
    is a mean-zero martingale. The statement of \Cref{lma:Martingale2} follows. 
\end{proof}

%We use four lemmas: Lemma \ref{lem: stability of semigroup} states that $e^{\calL^{W_n} t}$ is nonexpansive, Lemma \ref{lem: semigroup convergence} states that $e^{\calL^{W_n}}$ converges to $e^{\calL^W}$, and Lemma \ref{lem: mbp} ensures that the solution of the graphon RD equation is bounded in $L^\infty$. 

\begin{proof}[Proof of Theorem \ref{thm: lln in l2}]
The goal is to show that 
\begin{equation}\label{eq: WTS} 
    \lim_{n\ra\infty} P \Big( \sup_{t\in[0,T]}\|X^n(t)-u(t)\|_2 > \ep \Big) = 0\,. 
\end{equation}
Let $u_n$ be the solution to the graph reaction--diffusion equation \eqref{eq: graph RD} with the initial condition $u_n(0)$. 
Applying the triangle inequality, we get 
\begin{equation} \label{eq: TI for X^n - u}
    \|X^n(t) - u(t)\|_2 \leq \|X^n(t) - u_n(t)\|_2 + \|u_n(t) - u(t)\|_2\,. 
\end{equation}
Therefore, 
\begin{align*}
    P\left(\sup_{t\in[0,T]}\|X^n(t)-u(t)\|_2 >\ep\right) &\leq P\left(\sup_{t\in[0,T]}\|X^n(t)-u_n(t)\|_2 >\ep\right) 
    \\
    &\quad \quad + P\left(\sup_{t\in[0,T]}\|u_n(t)-u(t)\|_2 >\ep\right) 
    \\
    &= (A) + (B)  \,. 
\end{align*}
We proceed by showing that both $(A)$ and $(B)$ converge to zero as $n \ra \infty$. 
Term $(B)$ converges to zero as $n\ra \infty$ by the following argument. The estimate of \Cref{thm: graph rd convergence to graphon rd} implies that $\sup_{t \in [0,T] } \|u_n(t)-u(t)\|_2 \to 0$ as $n \to \infty$, because $u_n(x,0) = u(x,0) = u_0(x)$ and because $\|W_n-W\|_\square \ra 0$ as $n\ra \infty$ by assumption. It follows that $(B) \ra 0$ as $n\ra \infty$.

Next, we bound $(A)$. For a fixed $\ep > 0$ define the stopping time 
\begin{equation}\label{def:tau}
        \tau = \inf_{t\in [0,T]} \left\{t : \left\|X^n(t)-u_n(t)\right\|_2 > \ep \right\}.
\end{equation} 
Thus, we can rewrite 
\begin{equation}\label{eq: equivalent (A)}
    \begin{split}
        (A) = P\left(\sup_{t\in[0,T]}\|X^n(t)-u_n(t)\|_2 > \ep\right) 
        &\leq P\left(\sup_{t\in[0,T]}\|X^n(t \wedge \tau)-u_n(t \wedge \tau)\|_2 \geq \ep\right) 
    \end{split}
\end{equation}
because of the following. 
If $\sup_{t\in[0,T]} \| X^n(t) - u_n(t) \|_2 \geq \ep$, then $\tau \leq T$, and therefore
$\sup_{t \in [0,\tau]} \|X^n(t) - u_n(t) \|_2 \geq \ep$ by definition of $\tau$, which implies $\sup_{t \in [0,T]} \| X^n(t \wedge \tau) - u_n(t \wedge \tau) \|_2 \geq \ep$.

Therefore, it suffices to 
show that the right-hand side of \eqref{eq: equivalent (A)} converges to $0$ as $n \to \infty$. 
We claim that 
\begin{equation}\label{eq:stoppingtimestochasticbound}
    \sup_{t \in [0,T]} \| X^n(t \wedge \tau) \|_2 \leq C\,, 
\end{equation}
where $C$ is a constant that depends only on $\| u_0 \|_2$ and $T$. To prove this claim, we first obtain an $L^2$ bound on $u_n$. Using the semigroup formulation \eqref{eq:RDSemigroupFormula} of $u$
and the contractive bound on the semigroup $e^{t\cL^{W_n}}$ from \eqref{eq:Semigroup:LpStab}, together with the assumption that $\Phi$ is uniformly Lipschitz imply that
    \begin{equation*}
        \| u_n(\cdot,t) \|_2 \leq \| u_0 \|_2 + K \int_0^t \| u_n(\cdot,s) \|_2 \, ds.
    \end{equation*}
    Then Gronwall's inequality implies that
    \begin{equation}\label{eq: l2 bound on u_n}
        \| u_n(\cdot,t) \|_2 \leq \| u_0 \|_2 e^{ Kt } 
        \leq \|u_0\|_2 \, e^{KT} \qquad \forall t \in [0,T] \,. 
    \end{equation}
    If $t < \tau$, then
    \begin{align*}
        \| X^n(t \wedge \tau) \|_2 &\leq \| u_n(t \wedge \tau) \|_2 + \|X^n(t \wedge \tau) - u_n(t \wedge \tau) \|_2 
        \\
        &\leq \| u_0 \|_2 \, e^{KT} + \epsilon 
    \end{align*}
    due to \eqref{eq: l2 bound on u_n} and the definition of the stopping time. 
    If $t = \tau$, then
    \begin{align*}
        \| X^n(\tau) \|_2 \, \leq\,  \| X^n(\tau -) \|_2 + \| \Lambda X^n(\tau) \|_2 \, \leq \, \| u_0 \|_2 \, e^{KT} + \ep + 1\,, 
    \end{align*}
where we have used facts that $\|\Lambda X^n(t)\|_2 = \|\Lambda \vec{m}_k(t)/\ell\|_2 = \|1/\ell\|_2 \leq 1$ for all $t$, and that $\ell \geq 1$ for sufficiently large $n$. 
Thus the claim \eqref{eq:stoppingtimestochasticbound} is proved.

Now, we express $X^n(t \wedge \tau) - u_n(t \wedge \tau)$ using their semigroup formulations \eqref{eq: alt alt def X^n} and \eqref{eq:RDSemigroupFormula} to obtain 
\begin{equation*}
    \begin{split}
      &  X^n(t \wedge \tau) - u_n(t \wedge \tau) 
        = e^{ (t \wedge \tau) \calL^{W_n}}[X^n(0) - u_0] \\
        &\qquad + \int_0^{t \wedge \tau} e^{ (t \wedge \tau -s) \calL^{W_n} } [ \Phi(X^n(s)) - \Phi(u_n(s))] ds 
         + Y^n(t \wedge \tau).
    \end{split}
\end{equation*}
Thus,
\begin{align*}
    \|X^n(t \wedge \tau) - u_n(t \wedge \tau)\|_2 &\leq  \left\| Y^n(t \wedge \tau) \right\|_2 + \left\| e^{ (t \wedge \tau ) \calL^{W_n} }[X^n(0) - u_0]  \right\|_2 \\
    &\qquad + \left\| \int_0^{t \wedge \tau} e^{ (t \wedge \tau -s) \calL^{W_n} } [ \Phi(X^n(s)) - \Phi(u_n(s))] ds \right\|_2 
    \\
    &=: \left\| Y^n(t \wedge \tau) \right\|_2 + (\mathrm{I}) + (\mathrm{II}). 
    \label{eq: X^n - u to bound}
\end{align*}
First, \Cref{lem: stability of semigroup} implies that 
\begin{equation} \label{eq: bound on i}
    (\mathrm{I}) \leq \|X^n(0) - u_0 \|_2 \,. 
\end{equation} 
Using \Cref{lem: stability of semigroup} and the fact that $\Phi$ is $K$-Lipschitz, we also obtain the bound 
\begin{align*}
    (\mathrm{II}) 
    &\leq \int_0^{t \wedge \tau} \left\| e^{(t \wedge \tau -s)\calL^{W_n}} \big[\Phi(X^n(s))  - \Phi(u_n(s))\big] \right\|_2 \, ds  
    \\
    &\leq \int_0^{t \wedge \tau} \left\| \Phi(X^n(s))  - \Phi(u_n(s)) \right\|_2 \, ds 
    \\
    &\leq K \int_0^{t \wedge \tau} \|X^n(s) - u_n(s)\|_2 \, ds. \numberthis \label{eq: bound on ii}
\end{align*}
Due to equations \eqref{eq: bound on i} and \eqref{eq: bound on ii}, we have 
\begin{equation*} 
    \begin{split}
    \|X^n(t \wedge \tau) - u_n(t \wedge \tau)\|_2 &\leq \|X^n(0) - u_0 \|_2 + 
    \| Y^n(t \wedge \tau) \|_2 \\
    &\quad + K \int_0^{t \wedge \tau} \|X^n(s) - u_n(s)\|_2 \, ds.
    \end{split}
\end{equation*}
The integral form of Gronwall's inequality implies that 
\begin{equation*}
    \|X^n(t \wedge \tau) - u_n(t \wedge \tau) \|_2 \leq \big( \|X^n(0) - u_0 \|_2 + 
    \| Y^n(t \wedge \tau) \|_2 \big) e^{KT}.
\end{equation*}
Taking the supremum over the interval $[0,T]$, we get 
\begin{equation*}
    \sup_{t\in [0,T]} \|X^n(t \wedge \tau) - u_n(t \wedge \tau) \|_2 \leq \big( \|X^n(0) - u_0 \|_2 + \sup_{t \in [0,T]} 
    \| Y^n(t \wedge \tau) \|_2 \big) e^{KT}.
\end{equation*}

Taking probabilities, we get 
\begin{align*}
        &P\left(\sup_{t\in[0,T]}\|X^n(t \wedge \tau)-u_n(t \wedge \tau)\|_2 \geq \ep\right) \\
        \leq& P\left( \|X^n(0) - u_0\|_2 + \sup_{t \in [0,T]} \| Y^n(t \wedge \tau) \|_2 \geq \ep e^{-KT} \right) \\
        \leq& P\left( \|X^n(0) - u_0\|_2 \geq \frac{\ep e^{-KT}}{2} \right) + P \left( \sup_{t \in [0,T]} \| Y^n(t \wedge \tau) \|_2 \geq \frac{\ep e^{-KT}}{2} \right).\numberthis \label{eq:lln:FinalEstimate}
\end{align*}
By assumption, the first term of the right-hand side of \eqref{eq:lln:FinalEstimate} converges to zero. 
To bound the second term, we invoke \cite[Theorem 1]{kotelenez1982submartingale}, which is a maximal inequality for stochastic convolutions. 
\cite[Remark 1.1]{kotelenez1982submartingale} implies that our evolution operator $e^{t\calL^{W_n}}$ satisfies the assumptions needed in the theorem; specifically, by \Cref{thm:WellDefinedSemigroup}, we have that $e^{t\calL^{W_n}}$ is bounded. Furthermore, $\|Y^n(t)\|_2$ is separable because it is a c\`adl\`ag process. Therefore, \cite[Theorem 1]{kotelenez1982submartingale} implies that 
%\tcb{Let Kotelenez's $\Phi_s$ be the identity operator, $U(t,s)$ be the bounded operator $e^{(t-s)\calL^{W_n}}$, and $M_s$ be the martingale $Z^n(s)$ (defined in equation \eqref{eq: def Z^n}). Then, Kotelenez's $Q_M$ is the quadratic variation $\sum_{0 \leq s \leq T\wedge \tau} \|\Lambda Z^n(s)\|^2_2$. Then, choose $\varepsilon = \ep^2$.} 
    \begin{equation}\label{eq: kotelenez bound}
        P \left( \sup_{0 \leq t \leq T} \| Y^n(t \wedge \tau) \|_2 > \epsilon \right) \leq \frac{\mathrm{exp}(4T)}{  \epsilon^2 } \mathbb{E} \left[ \sum_{0 \leq s \leq T \wedge \tau} \| \Lambda Z^n(s) \|_2^2 \right]. 
    \end{equation}
    We proceed to show that the right-hand-side of \eqref{eq: kotelenez bound} converges to zero as $n\ra\infty$. Using the fact that $\Lambda Z^n = \Lambda X^n$, we have 
    \begin{equation}\label{eq: L^2 of jump Z}
         \| \Lambda Z^n(s) \|_2^2 = \frac{1}{n} \sum_{k = 1}^{n} (\Lambda X^n(k/n,s))^2 = \frac{1}{n \ell^2} \sum_{k = 1}^{n} (\Lambda m_k(s))^2.
    \end{equation}
    Due to \Cref{lma:Martingale2}, we have that 
    \begin{align*}
        \ex\left[ \sum_{0\leq s \leq t \wedge \tau} (\Lambda m_k(s))^2 \right] = \ex\Bigg[ \int_0^{t \wedge \tau} \frac{1}{n} \sum_{\substack{j = 1 \\ j \neq k}}^{n} W_n(k/n,j/n) (m_j(s) + m_k(s) ) ds 
        \\
        \quad \quad - 
        \int_0^{t \wedge \tau} \ell b(m_k(s)/\ell) - \ell d(m_k(s)/\ell) ds \Bigg] \,.
    \end{align*}
    Combining this equation with \eqref{eq: L^2 of jump Z}, and using the fact that $m_k(t) = \ell X^n(x,t)$ for $x\in I_k$, we obtain 
    \begin{equation*}
        \begin{split}
            &\mathbb{E} \left[ \sum_{0 \leq s \leq T \wedge \tau} \| \Lambda Z^n(s) \|_2^2 \right] = \ex\left[ \sum_{0 \leq s \leq T \wedge \tau} \frac{1}{n\ell^2} \sum_{k=1}^n (\Lambda m_k(s))^2 \right] 
            \\
            &\quad \quad + \mathbb{E} \left[ \int_0^{T \wedge \tau} \left( \frac{1}{n^2 \ell^2} \sum_{k=1}^n \sum_{\substack{i=1\\i \neq k}}^{n} W_n(\frac{k}{n},\frac{i}{n}) (m_i(s)+m_k(s)) + \frac{1}{n\ell} \sum_{k = 1}^{n}  (b + d)\left(\frac{m_k(s)}{\ell}\right) \right) \, ds\right] 
            \\
            & \qquad = \frac{1}{\ell}\, \mathbb{E} \bigg[ \int_0^{T \wedge \tau}  \int_0^1 \int_{(0,1) \setminus I_k} W_n(x,y) (X^n(y,s) + X^n(x,s)) dy dx 
            \\
            & \qquad \qquad + \int_0^{T \wedge \tau} \int_0^1 (b + d)(X^n(x,s)) dx \, ds \bigg]
            \\
            & \qquad \leq \frac{1}{\ell}\, \mathbb{E} \bigg[ \int_0^{T \wedge \tau}  \int_0^1 \int_0^1 W_n(x,y) \big|X^n(y,s) + X^n(x,s)\big| dy \,dx 
            \\
            & \qquad \qquad + \int_0^{T \wedge \tau} \int_0^1 \big|(b + d)(X^n(x,s)) \big| dx \, ds \bigg] 
            \\
            &\qquad \leq \frac{1}{\ell} \ex\left[\int_0^{T\wedge \tau}  (2+K)\|X^n(s)\|_1 ds \right] 
        \end{split}
    \end{equation*}
    where the last inequality is obtained using the $p=1$ case of \eqref{eq:DiffusionOperatorBd} and using the fact that $b+d$ is $K$-Lipschitz for some $K$.   
    Using the fact that $\|X^n(s)\|_1 \leq \|X^n(s)\|_2$ and the bound \eqref{eq:stoppingtimestochasticbound} on $\|X^n(s)\|_2$, we obtain
    \[ \frac{1}{\ell} \ex\left[\int_0^{T\wedge \tau}  (2+K) \|X^n(s)\|_1 ds \right] \leq \frac{C(\| u_0 \|_2,K,T)}{\ell} \to 0 \text{ as } n \to \infty.
    \]

Thus the right-hand side of \eqref{eq: equivalent (A)} converges to $0$ as $n \to \infty$, and the proof is complete.
\end{proof}

\section{Conclusion and future work}\label{sec: conclusion}
We defined graphon RD equations as nonlocal RD equations, where the diffusion term is a nonlocal integral operator whose kernel is the accompanying graphon. Suppose that a sequence of graphs (or graphons) $W_n$ converges in cut norm to a limiting graphon $W$. Our first convergence result states that the solutions to the sequence of graph RD equations corresponding to the graphs $W_n$ converge to the solution of the graphon RD equation corresponding to $W$. Our second main convergence result states that a random-walk birth-death particle system converges in probability to the solution of the graphon RD equation. 

Our first convergence result suggests that graphon RD equations can be used to approximate RD equations on large graphs, which include models of population ecology, disease spread, and chemical reactions \cite{cantrell2004spatial, colizza2007reaction, britton1986reaction}. Our second main convergence result suggests that microscopic (a.k.a. agent-based) models can be approximated by the macroscopic RD equation. 

One direction for future work is to extend the convergence results to systems of RD equations. For example, SIR-type models with diffusion consist of a system of three RD equations \cite{cantrell2004spatial}. 

Another direction for future work is to investigate stochastic particle systems on graphons: that is, one could find a graphon limit of the random-walk birth-death process on graphs. \cite{petit2021random} defines a random walk on graphons and shows formally that it is a limit of random walks on an approximating sequence of graphs. They show that the graph random walk converges to the graphon diffusion equation (in a special case of so-called quotient graphs). One could extend their results to include a birth-death process on graphons.

Beyond what is depicted in Figures \ref{fig: heat relations} and \ref{fig: rd relations}, one can consider graphon limits of RWBD$^{W_n}$. A \textit{graphon random walk} was introduced by Petit et al \cite{petit2021random}, who prove that the sequence of graph random walks corresponding to $W_n$ converge (in $L^2$ norm) to the graphon random walk when $W_n$ is a sequence of quotient graphs that converge to $W$. Furthermore, \cite{petit2021random} show that the graphon diffusion equation is the master equation of the graphon random walk. The study of graphon stochastic processes is a topic for future study.  

\section{Appendix}\label{sec: appendix}

In this section, we show that the quantities $\vec{M}(t)$ (equation \eqref{eq: M}) and $Z^n(t)$  (equation \eqref{eq: def Z^n}) are mean-zero martingales. We begin by deriving the infinitesimal generator for the graph random-walk-birth-death process $\vec{m}$ whose transitions are defined in \eqref{eq: def m jumps}, which is restated as follows.  

For brevity, we change the notation $W_n(k/n,i/n)$ to $W_n^{k,i}$. 
\begin{equation}
\left\{\begin{array}{l}
    \vec{m} \rightarrow \vec{m}+e_i-e_k  \quad \text { at rate } \frac{m_k}{n} W_n^{k,i}, \text{ for all } i,k \in [n] \,, \\
    \vec{m} \rightarrow \vec{m}+e_k \quad \quad \quad \text { at rate } \ell b\left(m_k/\ell\right) \,, \\
    \vec{m} \rightarrow \vec{m}-e_k \quad \quad \quad \text { at rate } \ell d \left(m_k/\ell\right)\,.
    \end{array}\right.
\end{equation} 
The following calculations use standard methods for continuous-time stochastic processes (see, e.g., \cite[Chapter 2]{norris1998markov}). Recall that $\vec{m}(t) = (m_1(t), \dots, m_n(t))$ is an $\R^n$-valued process whose $k$th entry, $m_k(t)$, gives the number of particles at node $k$ at time $t$. The infinitesimal generator $A$ of $\vec{m}(t)$ is a linear operator (i.e. it can be represented as an $n \times n$ matrix). For any continuous bounded function $f : \mathbb{R}^n \to \mathbb{R}^n$, the operator $A$ is defined as 
\begin{equation}\label{eq: def A}
    A(ff(\vec{x})) := \lim\limits_{t \to 0} \frac{ \mathbb{E}[ f(\vec{m}(t)) | \vec{m}(0) = \vec{x} ] - f(\vec{x}) }{t} \,. 
\end{equation}
To compute the right-hand side of \eqref{eq: def A}, we first compute the conditional expectation 
\begin{align*}
    \ex[f(\vec{m}(t)) | \vec{m}(0)=\vec{x}] &= t \sum_{k=1}^n \frac{x_k}{n} \sum_{i=1}^n W_n^{k,i} f(\vec{x}+e_i - e_k) 
    \\
    &\quad \quad + \ell t \sum_{k=1}^n b(x_k/\ell) f(\vec{x}+e_k) + \ell t\sum_{k=1}^n d(x_k/\ell)) f(\vec{x}-e_k)
    \\
    &\quad \quad + f(\vec{x}) \left(1- t \sum_{k=1}^n \left[ \frac{x_k}{n} \left(\sum_{i=1}^n W_n^{k,i}\right) + \ell  \big( b(x_k/\ell) + d(x_k/\ell) \big) \right]\right) 
\end{align*}
which is obtained by expanding over the four possible transitions that $\vec{m}$ can undergo within the time interval $[0,t]$: for any node $k$, a particle on node $k$ can jump to a neighbor $i$; a birth or death can occur at node $k$; and no change can occur. From this conditional expectation, the generator \eqref{eq: def A} becomes 
\begin{align*}
    A(f(\vec{x}))
    &= \sum_{k=1}^n \Bigg[ \frac{x_k}{n} \sum_{i=1}^n W_n^{k,i} \Big( f(\vec{x}+e_i - e_k) - f(\vec{x})\Big) 
    \\
    &\quad \quad + \ell b(x_k/\ell) \Big( f(\vec{x}+e_k) - f(\vec{x})\Big)
    + \ell d(x_k/\ell)) \Big( f(\vec{x}-e_k) - f(\vec{x})\Big) \Bigg] \,. 
\end{align*}

In order to describe the quantity $A(\vec{m}(t))$, we take $f$ to be the identity function. For $f = Id$, we have 
\begin{equation*}
    A(\vec{x}) = \sum_{k=1}^n \bigg[ \frac{x_k}{n} \sum_{i=1}^n W_n^{k,i} (e_i - e_k) + \ell e_k \big( b(x_k/\ell) - 
    d(x_k/\ell)\big)  \bigg] \,. 
\end{equation*}
This is the desired generator \eqref{eq: generator A}.

\subsection{\texorpdfstring{$Z^n(t)$ is a martingale}{Znt is a martingale}}\label{appendix: Z^n is a martingale}
To see why $Z^n(t)$ (equation \eqref{eq: def Z^n})
is a mean-zero martingale, 
we first take the dot product of $\vec{M}(t)$ (equation \eqref{eq: M})
with the $j$th standard unit vector:
\begin{equation}\label{eq: jth coordinate of M}
    \vec{M}(t) \cdot e_j = m_j(t \wedge \tau) - m_j(0) - \int_0^{t \wedge \tau} A(\vec{m}(s)) \cdot e_j \, ds.
\end{equation}
Again, let $W_n^{k,i}$ denote $W_n(k/n,i/n)$ for brevity. Using the definition \eqref{eq: generator A} of the generator $A$, for $\vec{x} = (x_1, \dots, x_{n})$ we have 
\begin{align*}
    A(\vec{x}) \cdot e_j 
    &= \frac{1}{n} \sum_{i,k=1}^n x_k W_n^{k,i} (e_i-e_k) \cdot e_j + \ell \sum_{k=1}^n \big( b(x_k/\ell) - d(x_k/\ell) \big) (e_k \cdot e_j) 
    \\
    &= \frac{1}{n} \sum_{i,k} x_k W_n^{k,i} (\delta_{ij} -\delta_{kj})  + \ell \big(  b(x_j/\ell) - d(x_j/\ell)\big)
    \\
    &= \frac{1}{n} \left( \sum_k x_k W_n^{j,k} - \sum_i x_j W_n^{i,j} \right) + \ell \big( b(x_j/\ell) -d(x_j/\ell)\big)
    \\
    &= \frac{1}{n} \left( \sum_i x_i W_n^{j,i} -\sum_i x_j W_n^{i,j} \right) + \ell \big( b(x_j/\ell) -d(x_j/\ell)\big)
    \\
    &= \frac{1}{n} \sum_i W_n^{i,j} (x_i - x_j) + \ell \big( b(x_j/\ell) -d(x_j/\ell)\big) \,. \numberthis \label{eq: jth coord of Ax}
\end{align*}
Thus, the infinitesimal generator $A$ acts on the $j$th coordinate of $\vec{x}$ as the operator $\calL^{W_n}\vec{x}_j + \ell \Phi(x_j/\ell)$. Furthermore, substituting \eqref{eq: jth coord of Ax} into \eqref{eq: jth coordinate of M}, we obtain 
\begin{equation}\label{eq: M dot ej}
    \begin{split}
        &\vec{M}(t) \cdot e_j 
        =  m_j(t \wedge \tau) - m_j(0) \\
        &\qquad - \int_0^{t \wedge \tau} \left( \frac{1}{n} \sum_{i=1}^{n} W_n^{i,j} (m_i(s) - m_j(s)) +   \ell b(m_j(s)/\ell) - \ell d(m_j(s)/\ell \right) ds
    \end{split}
\end{equation}
which is an $\mathcal{F}_t^{n,\ell}$ mean-zero martingale due to Dynkin's formula \cite[Chapter 7.4]{oksendal2013stochastic}. Let $y\in I_i$ and $x \in I_j$, and treat $W_n$ as a piecewise-constant function as defined in \eqref{eq: step-graphon}. Using the definition \eqref{eq: def X^n} of $X^n$, we rewrite equation \eqref{eq: M dot ej} as 

\begin{equation*}
    \begin{split}
        \vec{M}(t) \cdot e_j 
        &= \ell X^n(x,t\wedge \tau) - \ell X^n(x, 0) \\
        &\qquad - \int_0^{t \wedge \tau} \left( \int_0^1 W_n(y,x) (\ell X^n(y,s) - \ell X^n(x, s)) dy +  \ell \Phi(X^n(x, s)) \right) ds\,. 
    \end{split}
\end{equation*}

Because martingale properties hold under scalar multiplication, the quantity 
\begin{equation*}
    \begin{split}
        &\frac{ 1 }{\ell} \vec{M}(t) \cdot e_j
        = X^n(x, t \wedge \tau) - X^n(x,0) \\ & \qquad - \int_0^{t \wedge \tau} \left( \int_0^1 W_n(y,x) (X^n(y,s) - X^n(x,s)) dy + \Phi(X^n(x,s)) \right) ds \,,
    \end{split}
\end{equation*}
where $x\in I_j$, is a mean-zero martingale. As a result, the quantity $Z^n(t) = \sum_{j=1}^n \frac{1}{\ell} \vec{M}(t) \cdot e_j $ (equation \eqref{eq: def Z^n}) is a mean-zero martingale. 

\section*{Acknowledgment}
This work is supported in part by the National Science Foundation through grants DMS-2309245 and DMS-1937254.

\printbibliography

@book{blount1987comparison,
  title={Comparison of a stochastic model of a chemical reaction with diffusion and the deterministic model},
  author={Blount, Douglas James},
  year={1987},
  publisher={The University of Wisconsin--Madison}
}

@article{du2019multiscale,
  title={Multiscale modeling, homogenization and nonlocal effects: Mathematical and computational issues},
  author={Du, Qiang and Engquist, Bjorn and Tian, Xiaochuan},
  journal={arXiv preprint arXiv:1909.00708},
  year={2019}
}

@incollection{bollobas2007metrics, 
address={Cambridge, UK}, 
series={London Mathematical Society Lecture Note Series}, 
title={Metrics for sparse graphs}, 
DOI={10.1017/CBO9781107325975.009}, 
booktitle={Surveys in Combinatorics 2009}, 
publisher={Cambridge University Press}, 
author={Bollob\'{a}s, B\'{e}la and Riordan, Oliver}, 
editor={Huczynska, Sophie and Mitchell, James D. and Roney-Dougal, Colva M.}, 
year={2009}, 
pages={211--288}, 
collection={London Mathematical Society Lecture Note Series}}

@article{borgs2018LpII,
  title={An {$L^p$} theory of sparse graph convergence {II}: {LD} convergence, quotients and right convergence},
  author={Borgs, Christian and Chayes, Jennifer T. and Cohn, Henry and Zhao, Yufei},
  journal = {The Annals of Probability},
  volume = {46},
  number = {1},
  pages = {337--396},
  year={2018}
}

@article{borgs2019LpI,
  title={An {$L^p$} theory of sparse graph convergence {I}: {L}imits, sparse random graph models, and power law distributions},
  author={Borgs, Christian and Chayes, Jennifer and Cohn, Henry and Zhao, Yufei},
  journal={Transactions of the American Mathematical Society},
  volume={372},
  number={5},
  pages={3019--3062},
  year={2019}
}

@article{merris1994laplacian,
  title={Laplacian matrices of graphs: a survey},
  author={Merris, Russell},
  journal={Linear Algebra and its Applications},
  volume={197},
  pages={143--176},
  year={1994},
  publisher={Elsevier}
}

@article{zhang2024ginzburg,
  title={Ginzburg--Landau Functionals in the Large-Graph Limit},
  author={Zhang, Edith and Scott, James and Du, Qiang and Porter, Mason A},
  journal={Pure and Applied Functional Analysis}, 
  volume={1},
  pages={169--209}, 
  %arXiv preprint arXiv:2408.00422}

@inproceedings{yuasa1998internal,
  title={Internal observation systems and a theory of reaction-diffusion equation on a graph},
  author={Yuasa, Hideo and Ito, Masami},
  booktitle={1998 IEEE International Conference on Systems, Man, and Cybernetics},
  volume={4},
  pages={3669--3673},
  year={1998},
  organization={IEEE}
}

@article{serfaty2011gamma,
  title={Gamma-convergence of gradient flows on {H}ilbert and metric spaces and applications},
  author={Serfaty, Sylvia},
  journal={Discrete and Continuous Dynamical Systems},
  volume={31},
  number={4},
  pages={1427--1451},
  year={2011}
}

@book{norris1998markov,
  title={Markov chains},
  author={Norris, James R},
  number={2},
  year={1998},
  publisher={Cambridge University Press}
}

@book{oksendal2013stochastic,
  title={Stochastic differential equations: an introduction with applications},
  author={Oksendal, Bernt},
  year={2013},
  publisher={Springer Science \& Business Media}
}

@book{lovasz2012large,
  title={Large networks and graph limits},
  author={Lov{\'a}sz, L{\'a}szl{\'o}},
  volume={60},
  year={2012},
  publisher={American Mathematical Society}
}

@article{lovasz2007szemeredi,
  title={Szemer{\'e}di’s lemma for the analyst},
  author={Lov{\'a}sz, L{\'a}szl{\'o} and Szegedy, Bal{\'a}zs},
  journal={GAFA Geometric And Functional Analysis},
  volume={17},
  pages={252--270},
  year={2007},
  publisher={Springer}
}

@book{janson2010graphons,
  title={Graphons, Cut Norm and Distance, Couplings and Rearrangements},
  author={Janson, Svante},
  volume = {4},
  series = {NYJM Monographs},
  publisher={SUNY at Albany},
  year={2013}
}

@book{el2023nonlocal,
  title={Nonlocal Continuum Limits of $p$-Laplacian Problems on Graphs},
  author={El Bouchairi, Imad and Fadili, Jalal and Hafiene, Yosra and Elmoataz, Abderrahim},
  year={2023},
  publisher={Cambridge University Press}
}

@article{bramburger2023pattern,
  title={Pattern formation in random networks using graphons},
  author={Bramburger, Jason and Holzer, Matt},
  journal={SIAM Journal on Mathematical Analysis},
  volume={55},
  number={3},
  pages={2150--2185},
  year={2023},
  publisher={Society for Industrial and Applied Mathematics}
}

@article{heinze2024graph,
  title={Graph-based nonlocal gradient systems and their local limits},
  author={Heinze, Georg},
  year={2024},
  publisher={Universit{\"a}t Augsburg}
}

@book{britton1986reaction,
  title={Reaction-diffusion equations and their applications to biology.},
  author={Britton, Nicholas F},
  year={1987},
  publicher={Academic Press}
}

@article{de1986reaction,
  title={Reaction-diffusion equations for interacting particle systems},
  author={De Masi, Anna and Ferrari, Pablo A and Lebowitz, Joel L},
  journal={Journal of Statistical Physics},
  volume={44},
  number={3},
  pages={589--644},
  year={1986},
  publisher={Springer}
}

@article{van2021theory,
  title={A theory of pattern formation for reaction--diffusion systems on temporal networks},
  author={Van Gorder, Robert A},
  journal={Proceedings of the Royal Society A},
  volume={477},
  number={2247},
  pages={1--28},
  year={2021},
  publisher={The Royal Society Publishing}
}

@article{scalise2016emulating,
  title={Emulating cellular automata in chemical reaction--diffusion networks},
  author={Scalise, Dominic and Schulman, Rebecca},
  journal={Natural Computing},
  volume={15},
  pages={197--214},
  year={2016},
  publisher={Springer}
}

@article{colizza2007reaction,
  title={Reaction-diffusion processes and metapopulation models in heterogeneous networks},
  author={Colizza, Vittoria and Pastor-Satorras, Romualdo and Vespignani, Alessandro},
  journal={Nature Physics},
  volume={3},
  pages={276--282},
  year={2007},
  publisher={Nature Publishing Group}
}

@article{vanag2009cross,
  title={Cross-diffusion and pattern formation in reaction--diffusion systems},
  author={Vanag, Vladimir K and Epstein, Irving R},
  journal={Physical Chemistry Chemical Physics},
  volume={11},
  number={6},
  pages={897--912},
  year={2009},
  publisher={Royal Society of Chemistry}
}

@book{cantrell2004spatial,
  title={Spatial ecology via reaction-diffusion equations},
  author={Cantrell, Robert Stephen and Cosner, Chris},
  year={2004},
  publisher={John Wiley \& Sons}
}

@article{du2021maximum,
  title={Maximum bound principles for a class of semilinear parabolic equations and exponential time-differencing schemes},
  author={Du, Qiang and Ju, Lili and Li, Xiao and Qiao, Zhonghua},
  journal={SIAM Review},
  volume={63},
  number={2},
  pages={317--359},
  year={2021},
  publisher={SIAM}
}

@article{luo2017convergence,
  title={Convergence of the graph Allen--Cahn scheme},
  author={Luo, Xiyang and Bertozzi, Andrea L},
  journal={Journal of Statistical Physics},
  volume={167},
  pages={934--958},
  year={2017},
  publisher={Springer}
}

@article{du1992analysis,
  title={Analysis and approximation of the Ginzburg--Landau model of superconductivity},
  author={Du, Qiang and Gunzburger, Max D and Peterson, Janet S},
  journal={Siam Review},
  volume={34},
  number={1},
  pages={54--81},
  year={1992},
  publisher={SIAM}
}

@book{du2019nonlocal,
  title={Nonlocal Modeling, Analysis, and Computation},
  author={Du, Qiang},
  year={2019},
  publisher={SIAM}
}

@article{funaki2019motion,
  title={Motion by mean curvature from Glauber--Kawasaki dynamics},
  author={Funaki, Tadahisa and Tsunoda, Kenkichi},
  journal={Journal of Statistical Physics},
  volume={177},
  pages={183--208},
  year={2019},
  publisher={Springer}
}

@article{kotelenez1982submartingale,
  title={A submartingale type inequality with applications to stochastic evolution equations},
  author={Kotelenez, Peter},
  journal={Stochastics: An International Journal of Probability and Stochastic Processes},
  volume={8},
  number={2},
  pages={139--151},
  year={1982},
  publisher={Taylor \& Francis}
}

@book{engel2000one,
  title={One-parameter semigroups for linear evolution equations},
  author={Engel, Klaus-Jochen and Nagel, Rainer and Brendle, Simon},
  volume={194},
  year={2000},
  publisher={Springer}
}

@article{watanabe2022continuum,
  title={Continuum limit of nonlocal diffusion on inhomogeneous networks},
  author={Watanabe, Itsuki},
  journal={Journal of Dynamics and Differential Equations},
  pages={1--20},
  year={2022},
  publisher={Springer}
}

@article{medvedev2014nonlinear,
  title={The nonlinear heat equation on dense graphs and graph limits},
  author={Medvedev, Georgi S},
  journal={SIAM Journal on Mathematical Analysis},
  volume={46},
  number={4},
  pages={2743--2766},
  year={2014},
  publisher={SIAM}
}

@article{angstmann2013pattern,
  title={Pattern formation on networks with reactions: A continuous-time random-walk approach},
  author={Angstmann, Christopher N and Donnelly, Isaac C and Henry, Bruce I},
  journal={Physical Review E—Statistical, Nonlinear, and Soft Matter Physics},
  volume={87},
  number={3},
  pages={032804},
  year={2013},
  publisher={American Physical Society}
}

@article{petit2021random,
  title={Random walks on dense graphs and graphons},
  author={Petit, Julien and Lambiotte, Renaud and Carletti, Timoteo},
  journal={SIAM Journal on Applied Mathematics},
  volume={81},
  number={6},
  pages={2323--2345},
  year={2021},
  publisher={SIAM}
}

@book{pazy2012semigroups,
  title={Semigroups of linear operators and applications to partial differential equations},
  author={Pazy, Amnon},
  volume={44},
  year={2012},
  publisher={Springer Science \& Business Media}
}

\end{document}